\newtheorem{theorem}{Theorem}[section]
\newtheorem{lemma}[theorem]{Lemma}
\title{Uniform Haar Wavelet Solutions for Fractional Regular $\beta$-Singular BVPs Modeling Human Head Heat Conduction under Febrifuge Effects}\author{Narendra Kumar$^2$\thanks{$^2$narendrakumar.knp@gmail.com}, Lok Nath Kannaujiya$^1$\thanks{$^1$loknath.kannaujia159@gmail.com},  Amit K. Verma$^3$\thanks{$^3$Corresponding author: akverma@iitp.ac.in}\\\small{\it{$^{1,3}$ Department of Mathematics,}} \\\small{\it{Indian Institute of Technology Patna,}}\\\small{\it{ Bihta, Patna 801103, (BR) India.}}\\
\small{\it{$^{2}$ Department of Mathematics,}} \\\small{\it{Indian Institute of Technology Jodhpur,}}\\\small{\it{ Karwar, Jodhpur 342030, India.}}}
\date{}
\begin{document}
\justifying
\maketitle
\begin{abstract} 
This paper introduces nonlinear fractional Lane-Emden equations of the form, $$ D^{\alpha} y(x) + \frac{\lambda}{x^\beta}~ D^{\beta} y(x) + f(y) =0, ~ ~1 < \alpha \leq 2, ~~ 0< \beta \leq 1, ~~ 0 < x < 1,$$ 
subject to boundary conditions,
$$ y'(0) =\mathbf{a} , ~~~ \mathbf{c}~ y'(1) + \mathbf{d}~ y(1) = \mathbf{b},$$
where, $D^\alpha, D^\beta$ represent Caputo fractional derivative, $\mathbf{a, b,c,d} \in \mathbb{R}$, $ \lambda = 1, 2$,  and $f(y)$ is non linear function of $y.$ We have developed  collocation method namely, uniform fractional Haar wavelet collocation method and used it to compute solutions. The proposed method combines the quasilinearization method with the Haar wavelet collocation method. In this approach, fractional Haar integrations is used to determine the linear system, which, upon solving, produces the required solution. Our findings suggest that as the values of $(\alpha, \beta)$ approach $(2,1),$ the solutions of the fractional and classical Lane-Emden become identical.
\end{abstract}
\textit{Keywords:} Nonlinear, $\beta$-Analytic, Quasilinearization, $\beta$-Singular, Lane-Emden, Caputo Fractional Derivative, Uniform Haar wavelet.\\
\textit{AMS Subject Classification 2024:} 34A08; 34K37; 35R11; 26A33 
\section{Introduction}
Fractional differential equations (FrDE) have recently become a valuable tool for modeling a wide range of phenomena in applied research. Fractional derivatives are effective in capturing the memory and hereditary traits observed in several real-life problems. As the applications of FrDE continue to expand, researchers have been working on developing better techniques for solving them. Fractional derivatives can be used to represent non-linear seismic oscillations and fluid-dynamic traffic models \cite{podlubny1999introduction, he1998nonlinear, he1999some}. There are several operators used to study fractional differential equations, including the Grunwald-Letnikov fractional derivative \cite{scherer2011grunwald, miller1993introduction}, Riemann-Liouville fractional derivative (R-L FD) \cite{kilbas2006theory, miller1993introduction}, Caputo fractional derivative \cite{odibat2006approximations, podlubny1999introduction}, and Jumarie fractional derivative \cite{jumarie2006modified, jumarie2007fractional, jumarie2009table}. One-dimensional Lane-Emden equations, prevalent across various scientific and engineering disciplines, are the focus of significant attention \cite{math8071045}. Comprehensive discussions on the existence and uniqueness of solutions for such equations are excellently provided in notable works such as \cite{verma2011monotone, pandey2010solvability, pandey2010monotone}.

Let us consider the classical Lane-Emden equation,
\begin{equation}\label{3}
- \{ \theta''(\tau) + \frac{\kappa}{\tau} \theta'(\tau)\} = \psi(\tau,\theta),
\end{equation}
subject to the following boundary conditions (b.c.) $$\theta'(0) = a, ~~~\theta(1) = b,$$ where $a, b$ are constants, $\kappa =1, 2$ and $\psi(\tau,\theta)$ is the nonlinear function. These equations occur in many research areas and engineering applications. Chandrasekhar studied stellar structure \cite{chandrasekhar1939book, chandrasekhar1957introduction} and derived the following BVP,
\begin{equation}\label{chandra_DE}
     \frac{d^2 \theta(t)}{dt^2} + \frac{2}{t} \frac{d \theta(t)}{dt}+ \theta^5=0,
\end{equation}
with b.c., \begin{equation}
    \theta'(0)= 0, ~ ~  ~ ~~ \theta(1) = \frac{\sqrt{3}}{2}.
\end{equation}
Poisson-Boltzmann equation's solution for the critical condition of inflammability studied by Chambr\`e \cite{chambre1952solution},
\begin{equation}\label{chamber_DE}
     \frac{d^2 \theta}{dz^2} + \frac{k}{z} \frac{d \theta}{dz}+ \delta e^{-\theta}=0,
\end{equation}
with b.c.,
\begin{eqnarray}\begin{cases}
 \frac{d \theta}{d z} =0  & \mbox{at}~  z=0,\\
  \theta=0  & \mbox{at}~ z=1.
\end{cases}
\end{eqnarray}
In 1981 Anderson et al.\cite{anderson1981complementary}, in 2019, Zahoor et al. \cite{raja2018new} studied the steady state of spherical symmetrical heat conduction problem,
\begin{equation}\label{8}
    \frac{d^2 P}{dt^2} + \frac{2}{t} \frac{d P}{dt}+ \frac{\delta}{k} e^{-\alpha P}=0,
\end{equation}
with b.c.,
\begin{eqnarray}\begin{cases}\label{11}
  \frac{d P}{d t} =0, & \mbox{at} ~ t=0,\\
  -k \frac{d P}{d t} = \beta (T-T_{\alpha}), & \mbox{at}~ t=1.
\end{cases}
\end{eqnarray}
The given equation accurately describes the heat conduction issues that affect the human head. However, condition \eqref{11} becomes irrelevant when a febrifuge, such as ibuprofen or acetaminophen, is used.
Kang-Jia Wang \cite{wang2020new} modified the heat conduction equation taking the effect of febrifuge and proposed the fractional version of the heat conduction problem \eqref{8},
  \begin{equation}
      \frac{d^{2\tau}{u}}{dx^{2\tau}} + \frac{2}{x} \frac{d^{\tau}u}{dx^{\tau}} + \lambda e^{-m u} = 0,
  \end{equation}
  with b.c., \begin{equation}
      \begin{cases} \frac{du}{dx} = 0, & \mbox{at}~x = 0,\\
 \frac{du}{dx} = k(1-u),  & \mbox{at}~ x = 1,
\end{cases}
  \end{equation}
where $0< \tau \leq 1$ and $k$ is Biot Number. In this paper, we further explore and propose a new class of fractional Lane-Emden equations that arise in various branches of science and engineering in their classical form. 
As far as fractional Lane-Emden equations are considered, hardly any investigations are observed (Wang \cite{wang2020new}). 

In recent years, researchers have increasingly turned to wavelet methods for tackling the computation of solutions to differential equations. Among the myriad of wavelet types, Haar wavelets stand out for their distinct properties such as orthogonality and compact support, etc. Researchers have proposed various methods using Haar wavelets to solve differential equations of higher order and have tested these methods on different test problems. The results reveal that, despite employing a limited number of grid points, the Haar wavelet method exhibits notable accuracy. For instance, Majak et al. \cite{majak2015convergence} utilized the Richardson extrapolation technique to explore the convergence of Haar wavelets in solving nth-order differential equations pertaining to functionally graded beams. 

However, research on singular differential equations is scarce, and there are many possibilities for investigation. Therefore, we focus on the fractional version of the Lane-Emden equations. In this paper, we propose the following class of nonlinear fractional Lane-Emden equation,
\begin{equation}\label{P3_1}
D^\alpha y(x) + \frac{\lambda}{x^\beta} D^\beta y(x) + f(y) =0, ~~~ 0 < x <1,~ 0< \beta \leq 1, ~ 1 < \alpha \leq 2,
\end{equation}
subject to boundary condition (b.c.), $$  y'(0) = \mathbf{a} ~~~~ \mathbf{c}~ y'(1) + \mathbf{d}~ y(1) = \mathbf{b}, $$
 where $\mathbf{a, b,c,d} \in \mathbb{R},$ $D^\alpha, D^\beta$ represent Caputo fractional derivative,  $\lambda=1,2$ and $f(y)$ is a nonlinear function of $y$. Here $x=0$ is regular $\beta$-singular point of equation \eqref{P3_1} (see \cite{kilbas2006theory}).

It has been observed that there are only a few investigations on fractional Lane-Emden equations occur \cite{wang2020new, saeed2017haar}. This paper fills this gap by introducing a novel class of fractional Lane-Emden equation \eqref{P3_1}. We propose a new approach and derive a method using quasilinearization and the Haar wavelet method, called the uniform fractional Haar wavelet collocation method (UFHWCM) to simulate solutions for \eqref{P3_1}. Leveraging fractional integrals of Haar wavelets, the proposed method transforms the problem into a system of algebraic equations, effectively capturing solutions. A comprehensive analysis is also provided to examine the stability and convergence characteristics of UFHWCM. We conduct experiments on three test cases and observe that when $\alpha$ approaches $2$ and $\beta$ approaches $1$, the solutions obtained using UFHWCM are arbitrarily close to the exact solution of the classical Lane-Emden equation for $\alpha=2$ and $\beta=1$. The proposed approach is novel and may be studied further in various directions. We present the flowchart depicted in Figure \ref{Flowchart}, which outlines the overall arrangement of the proposed work.

Following this introduction, the paper is structured as follows: Section \ref{prelim} introduces key definitions and concepts. Section \ref{P3_Method} delves into the analysis of the collocation method UFHWCM when applied to fractional Lane-Emden equations under specific boundary conditions. Section \ref{convergence} presents its convergence and stability analysis. In Section \ref{P3_examples}, we examine three numerical experiments in detail. Finally, Section \ref{P3_conclusion} summarizes the outcomes and implications of our proposed method.

\section{Preliminary}\label{prelim}
This section provides an overview of definitions regarding Haar wavelets and their fractional integral. The basic idea is that wavelets give rise to the basis of $L^2(\mathbb{R})$ functions and any function $f\in L^2 (\mathbb{R})$ can be expanded as a series. Since Haar wavelets are piecewise-defined, it is easy to compute the coefficients in the series expansion but difficult to program. The calculations in this paper are done by using Mathematica. 
\subsection{Uniform Haar wavelets}\label{P3_2.1_uniform}
In 2014, Lepik and Hein \cite{lepik2014haar} employed an equal step size of $\Delta t = \frac{1}{2M}$ to partition the interval $[0,1]$ into $2M$ sub-intervals. The mother wavelet function of the Haar wavelet, denoted as $h_\varrho (t)$ for $\varrho > 1$, is defined as follows:
\begin{equation}
h_\varrho (t)=
 \begin{cases}  1, & \Upsilon_1(\varrho) \leq t < \Upsilon_2(\varrho),\\
-1, & \Upsilon_2(\varrho) \leq t < \Upsilon_3(\varrho),\\
0, & otherwise,
\end{cases}
\end{equation}
such that the values $\Upsilon_1(\varrho), \Upsilon_2(\varrho),$ and $\Upsilon_3(\varrho)$ are given by,
\begin{eqnarray}
&&\Upsilon_1(\varrho) = 2k \left(\frac{M}{m}\right) \Delta t, \\
&&\Upsilon_2(\varrho) = (2k+1) \left(\frac{M}{m}\right) \Delta t, \\
&&\Upsilon_3(\varrho) = 2(k+1) \left(\frac{M}{m}\right) \Delta t,
\end{eqnarray}
where $\varrho= m+k + 1$, and the parameters $J$, $M$, $j$, $m$, and $k$ are defined such that $J$ represents the maximum level of resolution, $M = 2^J$, $j =0,1,\cdots,J$, $m = 2^j$, and $k = 0,1, \cdots, m-1$. For $\varrho=1$, the Haar wavelet function $h_1(t)$ is defined as,
 \begin{equation}
 h_1(t) = \begin{cases}  1, & 0 \leq t <1\\
 0, &  otherwise.\end{cases}
 \end{equation}
The integral of the Haar function, $\mathcal{P}_{v,\varrho} (t)$ 
is defined as, for $\varrho >1,$
\begin{equation}
\mathcal{P}_{v,\varrho} (t) = \begin{cases} 
0, & t < \Upsilon_1(\varrho),\\
\frac{1}{v!}[ t- \Upsilon_1(\varrho)]^v,  & \Upsilon_1(\varrho) \leq t \leq \Upsilon_2(\varrho),\\
\frac{1}{v!} \{ [ t- \Upsilon_1(\varrho)]^v - 2 [ t- \Upsilon_2(\varrho)]^v \}, ~& \Upsilon_2(\varrho) \leq t \leq \Upsilon_3(\varrho),\\
\frac{1}{v!} \{ [ t- \Upsilon_1(\varrho)]^v -2 [ t- \Upsilon_2(\varrho)]^v  + [ t- \Upsilon_3(\varrho)]^v \}, & t > \Upsilon_3(\varrho).
\end{cases}
\end{equation}
For $\varrho=1,$ we have $\Upsilon_1(1) =0, ~~ \Upsilon_2(1) = \Upsilon_3(1)=1,$ and the expression for $\mathcal{P}_{v,1}$ becomes 
$$\mathcal{P}_{v,1} = \frac{t^v}{v!}.$$
Collocation points are defined as follows:
\begin{eqnarray}\label{collocation}
&&\eta_{cl} = \frac{\tilde{\eta}_{cl-1} + \tilde{\eta}_{cl}}{2},~~cl= 1,2,\cdots,2M,
\end{eqnarray}
where $\tilde{\eta}_{cl}$ are the grid points and defined by the following equation,
\begin{eqnarray*}
&& \tilde{\eta}_{cl} = cl\Delta t, ~~cl= 0,1,\cdots,2M.
\end{eqnarray*}
\subsection{Some Definitions} Several definitions of fractional derivatives are available. In this work, we use the following two versions.\\
\textbf{Riemann-Liouville Fractional Integral \cite{kilbas2006theory, podlubny1999introduction, miller1993introduction}:} The Riemann-Liouville fractional integral of a function $\psi (t)$ is given as,
\begin{equation}
I^\nu \psi (t) = \frac{1}{\Gamma{(\nu)}} \int_0^t (t-\tau)^{\nu -1} \psi (\tau) d\tau, ~~~ (m-1) < \nu \leq m, ~~~m = \lceil \nu \rceil.
\end{equation}
\textbf{Caputo Fractional Derivative \cite{podlubny1999introduction, odibat2006approximations}:} The Caputo fractional derivative of order $\nu > 0, $ of a function $\phi(t)$ is defined as, 
         \begin{equation}
             D^{\nu} \phi(t) = \frac{1}{\Gamma(m-\nu)} \int_{0}^{t} (t-\tau)^{m-\nu -1} \phi^m (\tau) d\tau,~~ (m-1) < \nu \leq m, ~~~ m = \lceil \nu \rceil.
         \end{equation}
In what follows we recall some important Lemmas from \cite{podlubny1999introduction}, that will be used in this paper.
\begin{lemma}\label{P3_lemma2_1}
 Let $\gamma >0$ and $ \psi(\tau) = \tau^\nu,$ then, $$ I^\gamma \psi(\tau) = \frac{\Gamma(\nu +1)}{\Gamma (\nu + \gamma + 1)} \tau^{\gamma + \nu}.$$
\end{lemma}
\begin{lemma}\label{P3_lemma2_2}
If $ \psi(\tau) = c,$ where $c$ is constant and $\gamma >0,$ then, $D^\gamma \psi(\tau) =0.$
\end{lemma}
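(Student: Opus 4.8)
The plan is to unwind the definition of the Caputo fractional derivative and exploit the fact that every ordinary derivative of a constant vanishes. First I would set $m = \lceil \gamma \rceil$. Because $\gamma > 0$, we have $m \geq 1$, so the Caputo definition genuinely calls for the $m$-th \emph{ordinary} derivative of the integrand function, with $m$ a positive integer; this is the only structural fact about $\gamma$ that the argument needs.

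Next I would compute that ordinary derivative for the constant $\psi(\tau) = c$. Since $\psi$ is constant and $m \geq 1$, we have $\psi^{(m)}(\tau) = 0$ for every $\tau$. Substituting into the definition then gives
$$ D^\gamma \psi(t) = \frac{1}{\Gamma(m-\gamma)} \int_0^t (t-\tau)^{m-\gamma-1}\, \psi^{(m)}(\tau)\, d\tau = \frac{1}{\Gamma(m-\gamma)} \int_0^t (t-\tau)^{m-\gamma-1} \cdot 0 \, d\tau = 0, $$
because the integrand is identically zero on $[0,t]$. This establishes the claim.

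There is no substantial obstacle here; the result is immediate once the definition is unwound. The single point that warrants care is the observation that $\gamma > 0$ forces $m = \lceil \gamma \rceil \geq 1$, which guarantees that at least one ordinary differentiation precedes the integration. This is precisely the feature distinguishing the Caputo derivative from the Riemann--Liouville derivative, and it is what makes the fractional derivative of a constant vanish.
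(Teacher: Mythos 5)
Your proof is correct and is the standard argument: the paper itself gives no proof of this lemma (it is recalled from the cited reference on fractional calculus), and the argument there is exactly the one you give --- since $m=\lceil\gamma\rceil\geq 1$, the Caputo definition differentiates $\psi$ an integer number of times first, so the integrand vanishes identically. The only cosmetic caveat is the degenerate case of integer $\gamma$, where $\Gamma(m-\gamma)=\Gamma(0)$ and the Caputo derivative is by convention the ordinary $m$-th derivative, which is still zero for a constant.
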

\begin{lemma}\label{P3_lemma2_3}
Let $\gamma >0, n = \lceil \gamma \rceil$ and $ \phi \in AC^n[a,b],$ then,
$$ I^\gamma~D^\gamma \phi(\zeta)= \phi(\zeta) - \sum_{k=0}^{n-1} \frac{\zeta^k}{k!} \phi^{(k)} (0).$$
\end{lemma}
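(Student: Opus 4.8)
The plan is to reduce the composite operator $I^\gamma D^\gamma$ to an ordinary $n$-fold integral by exploiting the special structure of the Caputo derivative together with the semigroup property of the Riemann--Liouville integral. First I would observe that, writing $n = \lceil \gamma \rceil$, the Caputo derivative defined above is precisely a Riemann--Liouville integral of order $n-\gamma$ applied to the ordinary $n$-th derivative of $\phi$: the kernel $(t-\tau)^{(n-\gamma)-1}/\Gamma(n-\gamma)$ is exactly that of $I^{n-\gamma}$, so that $D^\gamma \phi = I^{n-\gamma} \phi^{(n)}$. The hypothesis $\phi \in AC^n[a,b]$ guarantees that $\phi^{(n)}$ exists almost everywhere and is integrable, so this identity and all subsequent integrals are well defined.

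Next I would apply $I^\gamma$ to both sides and invoke the semigroup (index) law for Riemann--Liouville integrals, namely $I^\mu I^\nu \psi = I^{\mu+\nu} \psi$ for $\mu, \nu > 0$. This yields $I^\gamma D^\gamma \phi = I^\gamma I^{n-\gamma} \phi^{(n)} = I^n \phi^{(n)}$, collapsing the two fractional orders to the single integer $n$. The semigroup law is where essentially all of the analytic content resides: it follows by writing out the iterated integral, interchanging the order of integration by Fubini's theorem (justified by integrability of $\phi^{(n)}$), and evaluating the inner integral through the Beta--Gamma identity $\int_\tau^t (t-s)^{\mu-1}(s-\tau)^{\nu-1}\, ds = \tfrac{\Gamma(\mu)\Gamma(\nu)}{\Gamma(\mu+\nu)}(t-\tau)^{\mu+\nu-1}$.

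Finally, $I^n \phi^{(n)}$ is just the $n$-fold iterated ordinary integral of $\phi^{(n)}$ over $[0,\zeta]$. Repeated application of the fundamental theorem of calculus (equivalently, Taylor's theorem with integral remainder) gives $I^n \phi^{(n)}(\zeta) = \phi(\zeta) - \sum_{k=0}^{n-1} \frac{\zeta^k}{k!}\phi^{(k)}(0)$, where the boundary contributions assemble exactly into the truncated Taylor polynomial of $\phi$ at the origin. Substituting this back produces the claimed identity.

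I expect the main obstacle to be the rigorous justification of the index law $I^\gamma I^{n-\gamma} = I^n$, since this requires both the Fubini interchange and the Beta-function evaluation under only the stated regularity; the reduction of the Caputo derivative to $I^{n-\gamma}\phi^{(n)}$ and the final integer-order integration are routine once $\phi \in AC^n[a,b]$ secures the existence and integrability of $\phi^{(n)}$. One could alternatively substitute the two integral definitions directly and interchange the order of integration at the outset, but routing the argument through the semigroup property isolates the single nontrivial computation and keeps the bookkeeping clean.
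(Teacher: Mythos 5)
Your argument is correct and is the standard one: the paper does not prove this lemma at all (it is quoted from Podlubny's book), and your route --- rewriting the Caputo derivative as $D^\gamma\phi = I^{n-\gamma}\phi^{(n)}$, applying the semigroup law $I^\gamma I^{n-\gamma}=I^n$ via Fubini and the Beta integral, and finishing with Taylor's theorem with integral remainder for $\phi\in AC^n$ --- is precisely the textbook proof being invoked. The only point worth flagging is the boundary case where $\gamma$ is an integer (e.g.\ $\alpha=2$, which the paper allows): there $n-\gamma=0$ and the semigroup step is vacuous, with $D^\gamma\phi=\phi^{(n)}$ by convention and the identity following directly from the fundamental theorem of calculus.
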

\subsection{Fractional Integration of Uniform Haar Wavelets}
In the study by Saeed et al. (2017) \cite{saeed2017haar}, the Riemann-Liouville fractional integration of order $\upsilon$ for the uniform Haar wavelets is expressed as follows, when $\varrho = 1$:
\begin{equation}\label{P3_2_2}
I^\upsilon h_1 (t) = \frac{t^\upsilon}{\Gamma(\upsilon +1)},
\end{equation}
and, for $\varrho>1,$ 
\begin{align}\label{P3_2_3}
 p_{\upsilon, \varrho} (t) = I^\upsilon h_\varrho(t) &= \frac{1}{\Gamma(\upsilon)} \int_0^t (t-s)^{\upsilon-1} h_\varrho (s) ds,\nonumber\\
 &= \frac{1}{\Gamma(\upsilon +1)} \begin{cases}
(t-\Upsilon_1(\varrho))^\upsilon & \Upsilon_1(\varrho) \leq t < \Upsilon_2(\varrho), \\
(t-\Upsilon_1(\varrho))^\upsilon -2 (t-\Upsilon_2(\varrho))^\upsilon , & \Upsilon_2(\varrho) \leq t < \Upsilon_3(\varrho),\\
(t-\Upsilon_1(\varrho))^\upsilon -2 (t-\Upsilon_2(\varrho))^\upsilon + (t-\Upsilon_3(\varrho))^\upsilon, & t \geq \Upsilon_3(\varrho),
\end{cases}
\end{align}
where $\Upsilon_1(\varrho) ~ \Upsilon_2(\varrho), \Upsilon_3(\varrho)$ are defined as in equation \ref{P3_2.1_uniform}. The Haar matrix $H$ is constructed using collocation points \eqref{collocation} where $H(\varrho,cl) = h_\varrho (\eta_{cl} ).$ Furthermore, by replacing the collocation points \eqref{collocation} into equations \eqref{P3_2_2} and \eqref{P3_2_3}, we derive the integration matrix $P$ for fractional orders of the Haar function, resulting in $P_{\upsilon}(\varrho,cl) = p_{\upsilon, \varrho}(\eta_{cl}),$
$$ P = 
\begin{bmatrix}
p_{\upsilon, 1}(\eta_{cl}(1)) & p_{\upsilon,1}(\eta_{cl}(2)) \cdots & p_{\upsilon, 1}(\eta_{cl}(2M))\\
p_{\upsilon, 2}(\eta_{cl}(1)) & p_{\upsilon, 2}(\eta_{cl}(2)) \cdots & p_{\upsilon, 2}(\eta_{cl}(2M))\\
\cdots \\
\cdots \\
p_{\upsilon, 2M}(\eta_{cl}(1)) & p_{\upsilon, 2M}(\eta_{cl}(2)) \cdots &p_{\upsilon, 2M}(\eta_{cl}(2M))
\end{bmatrix}
.$$

\begin{figure}[H] 
  \centering
  \includegraphics[width=1.0\textwidth]{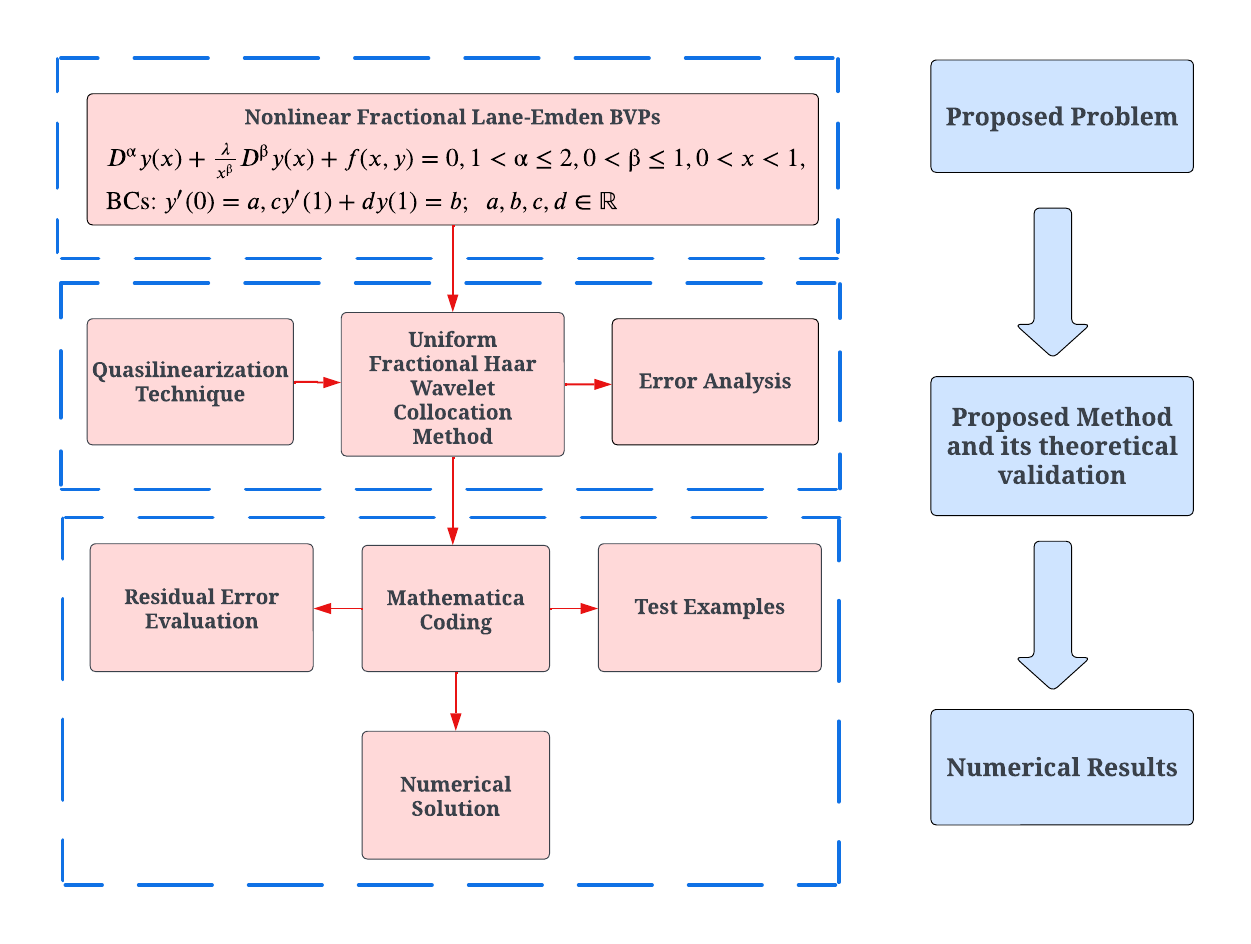} 
  \caption{The demonstration of the method design of the proposed work.} 
  \label{Flowchart}
\end{figure}

 \section{Proposed Method \& Its Algorithm}\label{P3_Method}
In this section, we propose method namely the uniform fractional Haar wavelet collocation method (UFHWCM). We use the collocation approach along with quasilinearization and the Haar wavelet method. The flowchart depicted in Figure \ref{Flowchart} elucidates the methodological design of the proposed study.

Quasilinearization is a technique that transforms a nonlinear differential equation into a sequence of linear differential equations. The solution is then obtained by taking the limit of the sequence of solutions of these linear differential equations. This method was first introduced by Bellman and Kabala \cite{bellman1965quasilinearization}. For any nonlinear ordinary differential equation, quasilinearization can be applied, and is explained briefly in \cite{mandelzweig2001quasilinearization}.\\
Now, using quasilinearization on \eqref{P3_1}, we get,
\begin{equation}\label{P3_3_1}
 D^\alpha y_{r+1} (x) + \frac{\lambda}{x^\beta} D^\beta y_{r+1} (x)= - \Big[ f(y_{r}(x)) + (y_{r+1}(x) - y_r(x)) f_{y_r}(y_{r}(x))\Big],
\end{equation}
with b.c., \begin{equation}\label{P3_3_1_bc}
 y_{r+1}'(0)=\mathbf{a}, ~~~ \mathbf{c}~ y_{r+1}'(1) + \mathbf{d}~ y_{r+1}(1) = \mathbf{b}.
\end{equation}
Let us consider, 
\begin{equation}\label{P3_3_2}
D^\alpha y_{r+1}(x) = \sum_{\varrho=1}^{2M} a_\varrho h_\varrho(x).
\end{equation}
The lower order derivative can be obtained by taking the fractional integral of equation \eqref{P3_3_2}, i.e., 
\begin{equation}\label{P3_3_3}
I^\alpha D^\alpha y_{r+1}(x) = \sum_{\varrho=1}^{2M} a_\varrho I^\alpha h_\varrho(x),
\end{equation}
since $1 <\alpha \leq 2,$ so, $n = \lceil \alpha \rceil =2,$ so, using Lemma \ref{P3_lemma2_3}, we get,
\begin{equation}\label{P3_3_3_1}
I^\alpha D^\alpha y_{r+1}(x) = y_{r+1}(x) - \sum_{k=0}^{n-1} \frac{x^k}{\Gamma(k+1)} y_{r+1}^{(k)}(0) = y_{r+1}(x) - y_{r+1}(0) - x y_{r+1}'(0),
\end{equation}
using equation \eqref{P3_3_1_bc} and \eqref{P3_3_3_1}, equation \eqref{P3_3_3} reduces to,
\begin{equation}\label{P3_3_4}
 y_{r+1}(x) = \sum_{\varrho=1}^{2M} a_\varrho I^\alpha h_\varrho(x) +  \mathbf{a} x + y_{r+1}(0).
\end{equation}
Further, we obtain,
\begin{equation*}
y_{r+1}'(x) = \sum_{\varrho=1}^{2M} a_\varrho I^{\alpha-1} h_\varrho(x) +  \mathbf{a}.
\end{equation*}
At $x=1,$ we have,
\begin{align*}
y_{r+1}(1) &= \sum_{\varrho=1}^{2M} a_\varrho I^\alpha h_\varrho(1) + \mathbf{a}+ y_{r+1}(0),\\
 y_{r+1}'(1) &= \sum_{\varrho=1}^{2M} a_\varrho I^{\alpha-1} h_\varrho(1) +  \mathbf{a}.
\end{align*}
using b.c. \eqref{P3_3_1_bc}, we get,
\begin{align*}
y_{r+1}(0) &= - \frac{\mathbf{c}}{\mathbf{d}}\sum_{\varrho=1}^{2M} a_\varrho I^{\alpha-1} h_\varrho(1) -\sum_{\varrho=1}^{2M} a_\varrho I^\alpha h_\varrho(1) + \Big(\frac{\mathbf{b} -\mathbf{a} \mathbf{c}- \mathbf{a}\mathbf{d}}{\mathbf{d}}\Big).
\end{align*}
So, equation \eqref{P3_3_4} reduces to,
\begin{equation}\label{P3_3_5}
 y_{r+1}(x) = \sum_{\varrho=1}^{2M} a_\varrho I^\alpha h_\varrho(x) + \mathbf{a} x - \frac{\mathbf{c}}{\mathbf{d}}\sum_{\varrho=1}^{2M} a_\varrho I^{\alpha-1} h_\varrho(1) -\sum_{\varrho=1}^{2M} a_\varrho I^\alpha h_\varrho(1) + \Big(\frac{\mathbf{b} -\mathbf{a} \mathbf{c}- \mathbf{a}\mathbf{d}}{\mathbf{d}}\Big).
\end{equation}
Applying Caputo fractional derivative $D^\beta$ on both side in equation \eqref{P3_3_5}, and using Lemma \ref{P3_lemma2_2}, we get,
\begin{equation}\label{P3_3_6}
D^\beta y_{r+1}(x)= \sum_{\varrho=1}^{2M} a_\varrho I^{\alpha - \beta} h_\varrho(x) + \frac{\mathbf{a}~ \Gamma{(2)}}{\Gamma{(2-\beta)}} x^{1-\beta}.
\end{equation}
Now substituting equations \eqref{P3_3_2}, \eqref{P3_3_5} and \eqref{P3_3_6}, in equation \eqref{P3_3_1} and using collocation points \eqref{collocation}, we get a system of linear algebraic equation of the following form,
\begin{eqnarray}
&& \psi_1 (a_1, a_2, \cdots, a_{2M})=0, \nonumber\\
 &&\psi_2 (a_1, a_2, \cdots, a_{2M})=0, \nonumber\\
 && \vdots \nonumber \\
 && \psi_{2M} (a_1, a_2, \cdots, a_{2M})=0.
\end{eqnarray} 
This system of equations can be conveniently represented in matrix-vector form as follows,
$$ TV = B,$$
where $T$ denotes a square matrix of dimension $2M \times 2M$, $B$ represents a vector comprising constants, and $V =(a_1, a_2, \cdots, a_{2M})$ is the vector of wavelet coefficients. Solving the aforementioned system of equations yields the wavelet coefficients $a_\varrho$. Subsequently, we substitute these determined coefficients into equation \eqref{P3_3_5} to derive the desired solution.
\subsection{Algorithm for UFHWCM}
Here, we outline the steps for our method to solve fractional Lane-Emden equations \eqref{P3_1}. The procedure involves the following key steps;\\
\textit{Step 1(Quasilinearization):} Transform the nonlinear Lane-Emden equations (\ref{P3_1}) into a sequence of linear Lane-Emden equations (\ref{P3_3_1}) using the quasilinearization scheme.\\
\textit{Step 2(Haar wavelet collocation):} Utilize the Haar wavelet collocation method, incorporating the boundary conditions into the sequence of linear Lane-Emden equations.\\
\textit{Step 3(Substitution and replacement):} We insert equations (\ref{P3_3_2}), (\ref{P3_3_5}), and (\ref{P3_3_6}) into equation (\ref{P3_3_1}), and we replace the variable $x$ with the collocation points defined in (\ref{collocation}).\\
\textit{Step 4(Solve linear system):} Following step $3$, we obtain a system of linear algebraic equations. These equations can be solved by initiating with an initial guess $x_0$ to determine the wavelet coefficients.\\
\textit{Step 5(First approximate solution):} We substitute the obtained wavelet coefficients into \eqref{P3_3_5} to get our first approximate solution.\\
\textit{Step 6(Iterative refinement):} Continue steps $2-5$ for $r$ iterations to get the Haar solutions until the desired accuracy.

\section{Convergence \& Stability Analysis} \label{convergence}

In this section, we present the convergence and stability analysis of the proposed method. 

\subsection{Convergence Analysis}
To prove the convergence, we require the following result.
\begin{lemma}\label{P3_lemma4_2} 
Let $r,s >0$ and $1 < \alpha \leq 2,$ then,
$$r^\alpha - s^\alpha \leq (r-s)^\alpha.$$
\end{lemma}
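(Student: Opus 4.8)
The plan is to reduce the two–variable claim to a one–variable statement and then exploit a monotonicity/convexity property of the power function. Since the right-hand side $(r-s)^\alpha$ is real only when $r\ge s$, I would first restrict to $r\ge s>0$, write $r=s+h$ with $h=r-s\ge 0$, and read the assertion as $(s+h)^\alpha-s^\alpha\le h^\alpha$. Treating $s$ as the running variable, I set $g(s)=(s+h)^\alpha-s^\alpha-h^\alpha$ on $s\ge 0$, note $g(0)=0$, and aim to show $g(s)\le 0$.

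The natural next step is to differentiate: $g'(s)=\alpha\big[(s+h)^{\alpha-1}-s^{\alpha-1}\big]$. For $1<\alpha\le 2$ the exponent $\alpha-1$ lies in $(0,1]$, so $t\mapsto t^{\alpha-1}$ is increasing and hence $g'(s)\ge 0$ for all $s\ge 0$. Combined with $g(0)=0$ this forces $g(s)\ge 0$, i.e. $(s+h)^\alpha-s^\alpha\ge h^\alpha$, which is the \emph{opposite} of the claimed bound. The same conclusion drops out instantly from superadditivity of $t\mapsto t^\alpha$ for $\alpha\ge 1$, applied to $a=r-s$ and $b=s$: $(r-s)^\alpha+s^\alpha\le r^\alpha$.

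Consequently, the main obstacle here is not a technical estimate but the direction of the inequality itself. With $r>s>0$ and $1<\alpha\le 2$ the honest monotonicity computation gives $r^\alpha-s^\alpha\ge (r-s)^\alpha$, and a single test value already breaks the stated version (take $r=2$, $s=1$, $\alpha=2$: the left side equals $3$ while the right side equals $1$). The inequality as written is valid precisely in the \emph{subadditive} regime $0<\alpha\le 1$, where $g'(s)\le 0$ yields $g(s)\le 0$; so the cleanest correct route to the stated form is to prove it there via $(s+h)^\alpha\le s^\alpha+h^\alpha$. If, as the convergence argument presumably requires, one only needs to control the difference $r^\alpha-s^\alpha$ across a small gap $h=r-s$ for $1<\alpha\le 2$, I would replace the target by the mean-value estimate $r^\alpha-s^\alpha=\alpha\,\xi^{\alpha-1}(r-s)\le \alpha\,r^{\alpha-1}(r-s)$ for some $\xi\in(s,r)$, which supplies the Lipschitz-type bound the analysis actually uses.
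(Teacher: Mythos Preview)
Your analysis is correct: the lemma as stated is false, and your counterexample $r=2$, $s=1$, $\alpha=2$ (giving $3\le 1$) settles it. More generally, for $\alpha\ge 1$ the map $t\mapsto t^\alpha$ is superadditive on $[0,\infty)$, so for $r\ge s>0$ one always has $r^\alpha-s^\alpha\ge (r-s)^\alpha$, the reverse of what is claimed.

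For comparison, the paper's own argument sets $x=s/r$, writes $(r-s)^\alpha+s^\alpha=r^\alpha f(x)$ with $f(x)=(1-x)^\alpha+x^\alpha$, and then asserts $\min_{x\in[0,1]}f(x)=1$. This is exactly where the paper goes wrong: for $1<\alpha\le 2$ the function $f$ is convex with $f(0)=f(1)=1$ and interior value $f(1/2)=2^{1-\alpha}<1$, so in fact $\min f=2^{1-\alpha}$ and the maxima (not the minima) of $f$ equal $1$. The claim $\min f=1$ holds only in the concave regime $0<\alpha\le 1$, precisely the range you identified where the inequality is true. Your diagnosis that the honest direction is $r^\alpha-s^\alpha\ge (r-s)^\alpha$ therefore matches what the paper's own substitution actually proves once the min/max error is corrected.

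Your proposed repair is also the right one for the downstream application. In the convergence proof the lemma is invoked to bound $(x-\Upsilon_1)^\alpha-(x-\Upsilon_2)^\alpha$ on the interval $x\ge\Upsilon_3$, and the mean-value estimate
\[
(x-\Upsilon_1)^\alpha-(x-\Upsilon_2)^\alpha=\alpha\,\xi^{\alpha-1}(\Upsilon_2-\Upsilon_1)\le \alpha\,(1-\Upsilon_1)^{\alpha-1}\cdot\frac{1}{2^{j+1}}\le \frac{\alpha}{2^{j+1}}
\]
gives a bound of the required order $O(2^{-j})$ (indeed better in $j$ than the claimed $O(2^{-\alpha j})$ for $\alpha>1$), which is all the error analysis needs.
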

\begin{proof} 
Let $\frac{s}{r} = x,$ then \begin{equation}\label{P3_lemma4_2_1}
(r-s)^\alpha + s^\alpha = r^\alpha \Big[ \Big(1-\frac{s}{r}\Big)^\alpha + \Big(\frac{s}{r}\Big)^\alpha \Big]=r^\alpha f(x).
\end{equation}
Since, $f(x)$ is continuous and for $1 < \alpha \leq 2,$ then, $\min_{x\in[0,1]}f(x)=1$.
Hence 
\begin{equation*}
r^\alpha \Big[ \Big(1-\frac{s}{r}\Big)^\alpha + \Big(\frac{s}{r}\Big)^\alpha \Big] \geq  r^\alpha  \min_{x\in[0,1]}f(x)  \geq r^\alpha \times 1 = r^\alpha.\\
\end{equation*}
Thus, $$r^\alpha - s^\alpha \leq (r-s)^\alpha.$$
This completes the proof.
\end{proof} 
\begin{theorem}
Let $E_M =  y_E(x) - y_M(x)$ where $y_E(x) ~\&~ y_M(x)$ is exact and Haar  solution respectively,  and $D^{\alpha +1}y(x)$ be continuous on $[0,1]$. Further, let there exist $\epsilon>0$ such that $|D^{\alpha +1} y(x)| \leq \epsilon$ on $[0,1]$ and $k$ is any positive real value then, 
$$\|E_M\|_2 \leq k \epsilon \Big( \frac{1}{2^{J+1}}\Big)^{\alpha-1} \Big\{ \Big( \frac{1}{2^{J+1}}\Big) \Big(\frac{1}{2^\alpha -1} \Big) + \frac{\mathbf{c}}{2\mathbf{d}} \Big(\frac{1}{2^{\alpha -1} -1}\Big) \Big\}.$$
\end{theorem}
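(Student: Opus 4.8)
The plan is to express the error $E_M$ as the tail of a Haar wavelet series and to control that tail term-by-term. First I would expand the exact solution in the full (infinite) Haar basis, writing $D^\alpha y_E(x) = \sum_{\varrho=1}^\infty a_\varrho h_\varrho(x)$, while the numerical solution truncates this to $D^\alpha y_M(x) = \sum_{\varrho=1}^{2M} a_\varrho h_\varrho(x)$. Substituting each into the representation \eqref{P3_3_5} and subtracting, the terms $\mathbf{a}x$ and the constant $\frac{\mathbf{b}-\mathbf{a}\mathbf{c}-\mathbf{a}\mathbf{d}}{\mathbf{d}}$ cancel, leaving
$$E_M(x) = \sum_{\varrho=2M+1}^\infty a_\varrho I^\alpha h_\varrho(x) - \frac{\mathbf{c}}{\mathbf{d}}\sum_{\varrho=2M+1}^\infty a_\varrho I^{\alpha-1}h_\varrho(1) - \sum_{\varrho=2M+1}^\infty a_\varrho I^\alpha h_\varrho(1).$$
Since the last two sums are constants in $x$, their $L^2[0,1]$ norm equals their absolute value, and the triangle inequality splits $\|E_M\|_2$ into three pieces.

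Next I would bound the coefficients. For a wavelet at resolution level $j$ (so $m=2^j$) the support has width $2^{-j}$ and each half-interval has width $2^{-(j+1)}$; orthogonality gives $a_\varrho = 2^j\langle D^\alpha y, h_\varrho\rangle$, and the mean value theorem for integrals rewrites this as $a_\varrho = \tfrac12\bigl(D^\alpha y(\xi_1)-D^\alpha y(\xi_2)\bigr)$ with $\xi_1,\xi_2$ in adjacent halves. Applying the mean value theorem once more and using $|D^{\alpha+1}y|\le\epsilon$ as the Lipschitz bound on $D^\alpha y$ yields $|a_\varrho|\le \epsilon\,2^{-(j+1)}$.

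The key geometric ingredient is bounding the fractional integrals of a single wavelet via the closed form \eqref{P3_2_3}. For the $I^\alpha$ branches, where $1<\alpha\le2$, I would invoke Lemma \ref{P3_lemma4_2} to control the second differences; for the $I^{\alpha-1}$ branch, where $0<\alpha-1\le1$, Lemma \ref{P3_lemma4_2} does not apply and instead I would use the subadditivity $(u+a)^{\alpha-1}\le u^{\alpha-1}+a^{\alpha-1}$. In both cases I expect each piece of $p_{\upsilon,\varrho}$ to be dominated by $\frac{1}{\Gamma(\upsilon+1)}\bigl(2^{-(j+1)}\bigr)^\upsilon$, giving $\|I^\alpha h_\varrho\|_2$ and $|I^\alpha h_\varrho(1)|$ of order $\bigl(2^{-(j+1)}\bigr)^\alpha$ and $|I^{\alpha-1}h_\varrho(1)|$ of order $\bigl(2^{-(j+1)}\bigr)^{\alpha-1}$.

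Finally I would assemble the estimate. On each level $j\ge J+1$ there are $2^j$ wavelets; multiplying the coefficient bound, the per-level count $2^j$, and the integral bounds, the factors $2^j$ and $2^{-(j+1)}$ partially cancel and each piece becomes a convergent geometric series in $j$. Summing from $j=J+1$ produces the factors $\frac{1}{2^\alpha-1}$ and $\frac{1}{2^{\alpha-1}-1}$ and pulls out the common $\bigl(2^{-(J+1)}\bigr)^{\alpha-1}$; the two $I^\alpha$ pieces merge into the first term inside the braces and the $I^{\alpha-1}$ piece produces the $\frac{\mathbf{c}}{2\mathbf{d}}$ term, with all $\Gamma$-factors and numerical constants absorbed into $k$. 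The main obstacle is the third step: obtaining clean, level-uniform bounds on the three branches of $p_{\upsilon,\varrho}$ — in particular the region $t>\Upsilon_3(\varrho)$, where the integral is supported on an interval of length up to $1$, and the concave case $\upsilon=\alpha-1$, where Lemma \ref{P3_lemma4_2} is unavailable and subadditivity must be used instead.
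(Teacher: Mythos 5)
Your proposal follows essentially the same route as the paper: expand the error as the Haar-series tail from \eqref{P3_3_5}, bound $|a_\varrho|\le \epsilon\,2^{-(j+1)}$ by two applications of the mean value theorem, bound the three branches of $p_{\upsilon,\varrho}$ by $k\,(2^{-(j+1)})^{\upsilon}$ using Lemma \ref{P3_lemma4_2}, and sum the resulting geometric series over levels $j\ge J+1$. The only substantive difference is that you explicitly flag that Lemma \ref{P3_lemma4_2} fails for the exponent $\alpha-1\in(0,1]$ and supply subadditivity of $t\mapsto t^{\alpha-1}$ in its place, a step the paper passes over with ``from the above, we deduce''; your version is the more careful one on that point.
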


\begin{proof} 
Since, we have,
\begin{align*}
 D^\alpha y(x) &= \sum_{\varrho=1}^{2M} a_\varrho h_\varrho(x),
\end{align*}
exact solution,
\begin{align*}
y_E(x) &= \sum_{\varrho=1}^{\infty} a_\varrho I^\alpha h_\varrho(x) + \mathbf{a} x - \frac{\mathbf{c}}{\mathbf{d}}\sum_{\varrho=1}^{\infty} a_\varrho I^{\alpha-1} h_\varrho(1) -\sum_{\varrho=1}^{\infty} a_\varrho I^\alpha h_\varrho(1) \\
 & \quad \quad+ \Big(\frac{\mathbf{b} -\mathbf{a} \mathbf{c}- \mathbf{a}\mathbf{d}}{\mathbf{d}}\Big),
\end{align*}
and, Haar solution,
\begin{align*}
y_M(x) &= \sum_{\varrho=1}^{2M} a_\varrho I^\alpha h_\varrho(x) + \mathbf{a} x - \frac{\mathbf{c}}{\mathbf{d}}\sum_{\varrho=1}^{2M} a_\varrho I^{\alpha-1} h_\varrho(1) -\sum_{\varrho=1}^{2M} a_\varrho I^\alpha h_\varrho(1) \\
& \quad \quad + \Big(\frac{\mathbf{b} -\mathbf{a} \mathbf{c}- \mathbf{a}\mathbf{d}}{\mathbf{d}}\Big).
\end{align*}
So, the error,
\begin{align*}
E_M  &= y_E(x) - y_M(x) = \sum_{\varrho= 2M+1}^{\infty} a_\varrho \Big(I^\alpha h_\varrho(x) - \frac{\mathbf{c}}{\mathbf{d}} I^{\alpha-1} h_\varrho(1) -I^\alpha h_\varrho(1)\Big)y.
\end{align*} 
Since, $ \varrho = 2^j +k+1$, then,
\begin{align*}
E_M = \sum_{j=J+1}^{\infty} \sum_{k=0}^{2^j-1} a_{ 2^j +k+1} \Big(I^\alpha h_{2^j +k+1}(x) - \frac{\mathbf{c}}{\mathbf{d}} I^{\alpha-1} h_{2^j +k+1}(1)-I^\alpha h_{2^j +k+1}(1)\Big).
\end{align*} 
Now, 
\footnotesize{\begin{align}\label{P3_3_1_1}
\|E_M\|_2^2 &= \int_0^1 \Big(\sum_{j=J+1}^{\infty} \sum_{k=0}^{2^j-1} a_{ 2^j +k+1} \Big(I^\alpha h_{2^j +k+1}(x)- \frac{\mathbf{c}}{\mathbf{d}} I^{\alpha-1} h_{2^j +k+1}(1)-I^\alpha h_{2^j +k+1}(1)\Big)\Big)^2 dx,\nonumber\\
&= \sum_{j=J+1}^{\infty} \sum_{k=0}^{2^j-1} \sum_{s=J+1}^{\infty} \sum_{t=0}^{2^s-1} a_{ 2^j +k+1} a_{ 2^s +t+1} \int_0^1  \Big(I^\alpha h_{2^j +k+1}(x)- \frac{\mathbf{c}}{\mathbf{d}} I^{\alpha-1} h_{2^j +k+1}(1)-I^\alpha h_{2^j +k+1}(1)\Big) \nonumber\\
 & \hspace{7.5cm} \Big(I^\alpha h_{2^s +t+1}(x)- \frac{\mathbf{c}}{\mathbf{d}} I^{\alpha-1} h_{2^s +t+1}(1)-I^\alpha h_{2^s +t+1}(1)\Big) dx, \nonumber\\
&= \sum_{j=J+1}^{\infty} \sum_{k=0}^{2^j-1} \sum_{s=J+1}^{\infty} \sum_{t=0}^{2^s-1} a_{ 2^j +k+1} a_{ 2^s +t+1} \int_0^1  \Big(p_{\alpha, 2^j +k+1}(x)- \frac{\mathbf{c}}{\mathbf{d}}p_{\alpha-1,2^j +k+1}(1)-p_{\alpha,2^j +k+1}(1) \Big) \nonumber\\
 & \hspace{7.5cm}\Big(p_{\alpha, 2^s +t+1}(x)- - \frac{\mathbf{c}}{\mathbf{d}}p_{\alpha-1, 2^s +t+1}(1)-p_{\alpha, 2^s +t+1}(1) \Big) dx.
\end{align}}
Now since, $D^\alpha y(x) = \sum_{\varrho=1}^{\infty} a_\varrho h_\varrho(x)$ then, by using the definition of uniform Haar function, we can evaluate coefficients $a_\varrho$ as,
\begin{align*}
a_\varrho &= 2^j \int_0^1 D^\alpha y(x) h_\varrho (x) dx,\\
a_\varrho &= 2^j  \Big(\int_{\Upsilon_1(\varrho)}^{\Upsilon_2(\varrho)} D^\alpha y(x) dx - \int_{\Upsilon_2(\varrho)}^{\Upsilon_3(\varrho)} D^\alpha y(x) dx \Big).
\end{align*}
Since, $D^\alpha y(x)$ be continuous on $[0,1],$ so we have,
$$a_\varrho = 2^j \{ (\Upsilon_2(\varrho)-\Upsilon_1(\varrho)) D^\alpha y (\xi_1)-(\Upsilon_3(\varrho)-\Upsilon_2(\varrho)) D^\alpha y (\xi_2)        \},$$
where $\xi_1 \in (\Upsilon_1(\varrho),\Upsilon_2(\varrho))~ \& ~~\xi_2 \in (\Upsilon_2(\varrho), \Upsilon_3(\varrho)),$ then,
$\Upsilon_2(\varrho) - \Upsilon_1(\varrho) = \Upsilon_3(\varrho) - \Upsilon_2(\varrho) = \frac{1}{2^{j+1}}.$
So we get, 
\begin{align*}
a_\varrho &= 2^j \Big\{ \frac{1}{2^{j+1}} D^\alpha y(\xi_1) - \frac{1}{2^{j+1}} D^\alpha y(\xi_2) \Big\},\\
&= \frac{1}{2} \{D^\alpha y(\xi_1) - D^\alpha y(\xi_2)\},\\
&= \frac{1}{2} (\xi_1 - \xi_2) D^{\alpha+1} y(\xi), ~~\xi \in (\xi_1, \xi_2).
\end{align*}
Since, $D^{\alpha +1} y(x)$ is bounded such that $|D^{\alpha+1} y(x)| \leq \epsilon,$ which implies, $a_\varrho \leq  \epsilon ~\Big(\frac{1}{2^{j+1}}\Big).$ Since, the fractional integral of uniform Haar wavelet is given in \eqref{P3_2_2} and \eqref{P3_2_3}, we first derive the bounds in each of the intervals, $[\Upsilon_1(\varrho), \Upsilon_2(\varrho)),~~ [\Upsilon_2(\varrho), \Upsilon_3(\varrho))$ ~~$\& ~~\Upsilon_3(\varrho) \leq x <1.$  Now for, $x \in [\Upsilon_1(\varrho), \Upsilon_2(\varrho))$, i.e., $ \Upsilon_1(\varrho) \leq x < \Upsilon_2(\varrho),$ we have, $$p_{\alpha, \varrho} (x) = \frac{(x-\Upsilon_1(\varrho))^\alpha}{\Gamma(\alpha +1)}.$$ Here, since, $p_{\alpha, \varrho}(x)$ is monotonically increasing. So,
\begin{align*}
0 \leq  p_{\alpha, \varrho}(x) &\leq p_{\alpha, \varrho}(\Upsilon_2(\varrho)),\\
|p_{\alpha, \varrho}(x)| &\leq \frac{(\Upsilon_2(\varrho)-\Upsilon_1(\varrho))^\alpha}{\Gamma(\alpha +1)},\\
|p_{\alpha, \varrho}(x)| &\leq \frac{1}{\Gamma(\alpha +1)} \Big( \frac{1}{2^{j +1}}\Big)^\alpha.
\end{align*}
Now, since $p_{\alpha, \varrho}(x)$  is continuous on compact interval $[\Upsilon_2(\varrho), \Upsilon_3(\varrho)],$ so, the function $p_{\alpha, \varrho}(x)$ attains it's global maximum in this interval. Since,
\begin{align*}
p_{\alpha, \varrho}(x) &= \frac{1}{\Gamma(\alpha +1)} \{ (x-\Upsilon_1(\varrho))^\alpha - 2 (x-\Upsilon_2(\varrho))^\alpha \}, ~~~~~~~~ \Upsilon_2(\varrho) \leq x \leq \Upsilon_3(\varrho).\\
\end{align*}
Then,
\begin{align*}
p'_{\alpha, \varrho} (x) &=  \frac{1}{\Gamma(\alpha +1)} \{ \alpha (x-\Upsilon_1(\varrho))^{\alpha-1} - 2 \alpha(x-\Upsilon_2(\varrho))^{\alpha-1} \},\\
&=  \frac{1}{\Gamma(\alpha)} \{  (x-\Upsilon_1(\varrho))^{\alpha-1} - 2 (x-\Upsilon_2(\varrho))^{\alpha-1} \}.
\end{align*}
So, $p_{\alpha, \varrho} (x)$ will be monotonically increasing if
\begin{equation*}
p'_{\alpha, \varrho} (x) \geq 0,
\end{equation*}
after simplification, we get, $$x \leq \frac{2^{\frac{1}{\alpha -1}} \Upsilon_2(\varrho) -\Upsilon_1(\varrho)}{2^{\frac{1}{\alpha -1}}-1}.$$\\
Thus $p_{\alpha, \varrho} (x)$ is monotonically increasing if $x \leq \frac{2^{\frac{1}{\alpha -1}} \Upsilon_2(\varrho) -\Upsilon_1(\varrho)}{2^{\frac{1}{\alpha -1}}-1} $ in $[\Upsilon_2(\varrho), \Upsilon_3(\varrho)].$ By using the method of contradiction, we can easily prove that, $$\Upsilon_3(\varrho) \leq \frac{2^{\frac{1}{\alpha -1}} \Upsilon_2(\varrho) -\Upsilon_1(\varrho)}{2^{\frac{1}{\alpha -1}}-1} .$$
Therefore, we have, $$0 \leq p_{\alpha,\varrho} (x) \leq p_{\alpha,\varrho} (\Upsilon_3(\varrho)), ~~~~~~\forall x \in [\Upsilon_2(\varrho), \Upsilon_3(\varrho)],$$
and, \begin{align*}
|p_{\alpha, \varrho}(x)| &\leq \frac{1}{\Gamma(\alpha +1)} \{ (\Upsilon_3(\varrho)-\Upsilon_1(\varrho))^\alpha - 2 (\Upsilon_3(\varrho) -\Upsilon_2(\varrho))^\alpha\},\\
&= \Big(\frac{2^\alpha -2}{\Gamma(\alpha +1)}\Big) \Big(\frac{1}{2^{j+1}}\Big)^\alpha, ~~~~~~\forall x \in [\Upsilon_2(\varrho), \Upsilon_3(\varrho)].
\end{align*}
For $ 1> x \geq \Upsilon_3(\varrho),$ we have,
$$ p_{\alpha, \varrho} (x) = \frac{1}{\Gamma{(\alpha +1)}} \Big[ (x-\Upsilon_1(\varrho))^\alpha - 2 (x-\Upsilon_2(\varrho))^\alpha + (x-\Upsilon_3(\varrho))^\alpha \Big].$$
By using Lemma \ref{P3_lemma4_2}, we get,
\begin{align*}
p_{\alpha, \varrho} (x) &=  \frac{1}{\Gamma{(\alpha +1)}} \Big[ (x-\Upsilon_1(\varrho))^\alpha - 2 (x-\Upsilon_2(\varrho))^\alpha + (x-\Upsilon_3(\varrho))^\alpha \Big],\\
&=  \frac{1}{\Gamma{(\alpha +1)}} \Big[ (x-\Upsilon_1(\varrho))^\alpha -  (x-\Upsilon_2(\varrho))^\alpha + (x-\Upsilon_3(\varrho))^\alpha -(x-\Upsilon_2(\varrho))^\alpha \Big],\\
&\leq \frac{1}{\Gamma{(\alpha +1)}} \Big[ \{\Upsilon_2(\varrho) -\Upsilon_1(\varrho)\}^\alpha + \{\Upsilon_2(\varrho)-\Upsilon_3(\varrho)\}^\alpha\big],\\
&\leq \frac{1}{\Gamma{(\alpha +1)}} \Big[ \Big( \frac{1}{2^{j+1}}\Big)^\alpha + \Big( \frac{-1}{2^{j+1}}\Big)^\alpha\Big],\\
&= \frac{1}{\Gamma{(\alpha +1)}} (1 + (-1)^\alpha )\Big( \frac{1}{2^{j+1}}\Big)^\alpha.
\end{align*}
Hence, we have, \begin{equation*}
| p_{\alpha, \varrho} (x)| \leq k \Big( \frac{1}{2^{j+1}}\Big)^\alpha, ~~\forall x \in [0,1].
\end{equation*}
From the above, we deduce the following, 
\begin{equation*} 
| p_{\alpha-1, \varrho} (x)| \leq k \Big( \frac{1}{2^{j+1}}\Big)^{\alpha-1}, ~~\forall x \in [0,1].
\end{equation*}
Then, \begin{eqnarray}
&& |p_{\alpha, \varrho}(1)|\leq k \Big( \frac{1}{2^{j+1}}\Big)^\alpha. \nonumber\\
&& |p_{\alpha-1, \varrho}(1)|\leq k \Big( \frac{1}{2^{j+1}}\Big)^{\alpha-1} \nonumber.
\end{eqnarray}
Now, using inequality, \begin{equation}
\Big(p_{\alpha, \varrho}(x)-\frac{\mathbf{c}}{\mathbf{d}} p_{\alpha-1, \varrho}(1)-p_{\alpha, \varrho}(1) \Big) \leq ~ |p_{\alpha, \varrho}(x)| + \frac{\mathbf{c}}{\mathbf{d}} |p_{\alpha-1, \varrho}(1)|+|p_{\alpha, \varrho}(1)|,
\end{equation}
we get, \begin{equation*}
\Big(p_{\alpha, \varrho}(x)-\frac{\mathbf{c}}{\mathbf{d}} p_{\alpha-1, \varrho}(1)-p_{\alpha, \varrho}(1) \Big) \leq  k \Big( \frac{1}{2^{j+1}}\Big)^{\alpha-1} \Big(\frac{1}{2^j} + \frac{\mathbf{c}}{\mathbf{d}} \Big), ~~~~~\forall x \in [0,1].
\end{equation*}
Now, since the Haar function satisfies the orthogonal property, so we have,
\begin{align*}
a_{2^j +k+1} &\leq \epsilon \Big( \frac{1}{2^{j+1}}\Big),\\
a_{2^s +t+1} &\leq \epsilon \Big( \frac{1}{2^{s+1}}\Big).
\end{align*}
So, \eqref{P3_3_1_1} reduces to,
\begin{align*}
\|E_M\|_2^2 &\leq  k^2 \epsilon^2 \sum_{j=J+1}^{\infty} \sum_{k=0}^{2^j-1} \sum_{s=J+1}^{\infty} \sum_{t=0}^{2^s-1} \Big( \frac{1}{2^{j+1}}\Big) \Big( \frac{1}{2^{s+1}}\Big) \Big( \frac{1}{2^{j+1}}\Big)^{\alpha-1} \Big( \frac{1}{2^j} + \frac{\mathbf{c}}{\mathbf{d}}\Big) \Big( \frac{1}{2^{s+1}}\Big)^{\alpha-1} \Big( \frac{1}{2^s} + \frac{\mathbf{c}}{\mathbf{d}}\Big),\\ 
 &= k^2 \epsilon^2 \Big\{ \Big( \frac{1}{2^{J+2}}\Big)^\alpha \Big(\frac{2^\alpha}{2^\alpha -1} \Big) + \frac{\mathbf{c}}{\mathbf{d}} \Big( \frac{1}{2^{J+1}}\Big)^{\alpha-1} \frac{1}{2 (2^{\alpha -1} -1} \Big\}^2.
 \end{align*}
 Hence we arrive at, 
 \begin{equation}\label{P3_4_1_1}
 \|E_M\|_2 \leq k \epsilon \Big( \frac{1}{2^{J+1}}\Big)^{\alpha-1} \Big\{ \Big( \frac{1}{2^{J+1}}\Big) \Big(\frac{1}{2^\alpha -1} \Big) + \frac{\mathbf{c}}{2\mathbf{d}} \Big(\frac{1}{2^{\alpha -1} -1}\Big) \Big\},
\end{equation}
with $M = 2^J$ and $J$ denoting the highest resolution level, the equation \eqref{P3_4_1_1} ensures the convergence of UFHWCM.
\end{proof} 
\subsection{Stability Analysis}

To ensure the stability of UFHWCM, we must monitor the condition number of the system of algebraic equations. To achieve this, we express the system of algebraic equations derived from the Haar wavelet Collocation method as,
\begin{align}
TV=B,
\end{align}
where $T$ is a square matrix ($2M \times 2M$), $B$ is a constants vector, and $V$ represents wavelet coefficients. The condition number of matrix $T$ measures its sensitivity to input changes. It's defined as $\kappa(T) = ||T|| \cdot ||T^{-1}||$, where $||\cdot||$ denotes the matrix norm, indicating how much the output varies with small changes in the input. A low condition number ($\kappa(T)$) implies better stability, while a high $\kappa(T)$ suggests potential numerical instability. To perform a numerical stability check, we employ the following theorem:
\begin{theorem}
\cite{Randall_2007}Consider a numerical method applied to an ordinary differential equation, resulting in a sequence of matrix equations in the form $TV = B$. We define the method as stable under the following conditions:
\begin{enumerate}
    \item The inverse of the matrix $T$, denoted as $T^{-1}$, exists for all sufficiently large values of $M (M > \mathcal{M}_0)$.
    \item There exists a constant $\mathcal{C}$, independent of $M$, such that the norm of the inverse matrix, $||T^{-1}||$, is bounded by $\mathcal{C}$ for all $M> \mathcal{M}_0$.
\end{enumerate}
\end{theorem}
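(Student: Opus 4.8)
The plan is to read this statement as a stability criterion in the sense of the cited reference: it is a \emph{definition} of stability through two verifiable clauses, so the mathematical content to be supplied is the verification of those clauses for the particular $2M\times 2M$ matrix $T$ produced by UFHWCM. The starting point is to write $T$ explicitly from the substitution of \eqref{P3_3_2}, \eqref{P3_3_5} and \eqref{P3_3_6} into \eqref{P3_3_1} at the collocation points \eqref{collocation}, obtaining the splitting
\[
T \;=\; H \;+\; \lambda\, D_\beta\, P_{\alpha-\beta} \;+\; D_f\,(P_\alpha - C_0),
\]
where $H$ is the uniform Haar matrix coming from $D^\alpha y_{r+1}=\sum a_\varrho h_\varrho$, $D_\beta=\operatorname{diag}(\eta_{cl}^{-\beta})$ encodes the $\beta$-singular weight, $P_{\alpha-\beta}$ and $P_\alpha$ are the fractional Haar integration matrices, $D_f=\operatorname{diag}(f_{y_r})$ is the bounded quasilinearization factor, and $C_0$ is the low-rank update carrying the boundary terms evaluated at $x=1$. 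The idea is then to treat $E := \lambda D_\beta P_{\alpha-\beta} + D_f(P_\alpha - C_0)$ as a perturbation of the dominant, explicitly invertible term $H$.

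For clause~(1), the first step is to record that the uniform Haar matrix $H$ is nonsingular; using its (near-)orthogonality one obtains a controlled $\|H^{-1}\|$ after the standard level-wise normalisation. Then writing $T = H\,(I + H^{-1}E)$, invertibility for all $M>\mathcal{M}_0$ follows from a Neumann-series argument provided $\|H^{-1}E\|<1$. The low-rank correction $C_0$ is handled separately by a Sherman--Morrison/Woodbury reduction, so that invertibility of $T$ is reduced to that of $H + \lambda D_\beta P_{\alpha-\beta} + D_f P_\alpha$.

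For clause~(2), the plan is to turn the same perturbation estimate into the uniform bound $\|T^{-1}\| \le \|H^{-1}\|\,\big(1 - \|H^{-1}E\|\big)^{-1} \le \mathcal{C}$, which requires $\|H^{-1}E\|$ to stay bounded away from $1$ independently of $M$. The bounds $|p_{\alpha,\varrho}(x)|\le k\,(2^{-(j+1)})^{\alpha}$ and $|p_{\alpha-1,\varrho}(x)|\le k\,(2^{-(j+1)})^{\alpha-1}$ already derived in the convergence proof, together with the geometric summability they produce, control the $P_\alpha$ contribution uniformly. The remaining task is to bound the singular term $\lambda D_\beta P_{\alpha-\beta}$, using the fact that on the support of $h_\varrho$ one has $p_{\alpha-\beta,\varrho}(x)\sim (x-\Upsilon_1(\varrho))^{\alpha-\beta}$, which partially cancels the weight $\eta_{cl}^{-\beta}$.

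The hard part will be exactly this uniform control of $D_\beta P_{\alpha-\beta}$ at the left endpoint: at the first collocation point $\eta_{1}\approx 1/(4M)$ the weight grows like $(4M)^{\beta}$, and since for $\varrho=1$ the product behaves like $\eta_{cl}^{\,\alpha-2\beta}$, the net exponent $\alpha-2\beta$ can be negative for admissible $(\alpha,\beta)$, threatening a loss of the uniform bound as $M\to\infty$. The strategy to overcome this is to carry out the Gershgorin/Neumann estimate in a weighted norm adapted to the $\beta$-singularity, absorbing the endpoint row into a rank-one modification treated via Woodbury; where the weighted estimate is delicate, the analytic conclusion is to be corroborated by the numerically observed condition numbers $\kappa(T)$ reported for the test problems, confirming that $\mathcal{C}$ does not grow with the resolution level $J$.
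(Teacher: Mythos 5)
You have read the statement correctly as a stability \emph{criterion} (essentially a definition quoted from the cited reference), so that the real task is to verify its two clauses for the UFHWCM matrix $T$. Be aware, however, that the paper does not do this analytically at all: its entire ``proof'' that the method satisfies the criterion is the numerical evidence in Tables \ref{norm_T_inverse} and \ref{Condition_number}, namely that $\|T^{-1}\|_2$ stays near $0.7071$ and that $\kappa(T)$ remains moderate across $J=1,\dots,6$ for the three test cases. Your proposal therefore takes a genuinely different and more ambitious route: the splitting $T=H+\lambda D_\beta P_{\alpha-\beta}+D_f(P_\alpha-C_0)$ is structurally the right reading of substituting \eqref{P3_3_2}, \eqref{P3_3_5} and \eqref{P3_3_6} into \eqref{P3_3_1}, and a Neumann-series/Woodbury argument on this splitting is the natural way to turn the criterion into a theorem about UFHWCM. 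If completed, it would buy something the paper does not have: a proof of clauses (1) and (2) rather than an empirical check at finitely many resolutions.

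The proposal is not complete, though, and the gap is exactly where you flag it. The Neumann argument needs $\|H^{-1}E\|$ bounded away from $1$ \emph{uniformly in $M$}, and the obstruction is the row of $\lambda D_\beta P_{\alpha-\beta}$ at the first collocation point $\eta_1\approx 1/(4M)$: for $\varrho=1$ the entry behaves like $\lambda\,\eta_1^{\alpha-2\beta}/\Gamma(\alpha-\beta+1)$, and $\alpha-2\beta<0$ is admissible in the stated parameter range $1<\alpha\le 2$, $0<\beta\le 1$ (it happens to be $\ge 0$ for the paper's choices $\alpha=\beta+1$, but the theorem as you frame it should cover the general case). You propose to absorb this via a weighted norm and a rank-one correction, but you do not produce the weighted estimate, and your closing sentence --- that the conclusion ``is to be corroborated by the numerically observed condition numbers'' --- concedes the point: at that moment the argument collapses back into precisely the numerical verification the paper already gives, so nothing beyond the paper has actually been established. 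Two further details would need care even on the unproblematic part: the bounds $|p_{\alpha,\varrho}(x)|\le k(2^{-(j+1)})^{\alpha}$ from the convergence proof control individual entries, but you must still sum them over $\varrho=1,\dots,2M$ per row (the $\varrho=1$ column contributes an $O(1)$ entry $x^{\alpha}/\Gamma(\alpha+1)$ that does not decay with $j$), and the invertibility and norm of the unnormalised uniform Haar collocation matrix $H$ must be stated precisely, since $\|H^{-1}\|$ is what the whole perturbation bound is measured against.
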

To verify the above result, we present the $||T^{-1}||_2$ for all the test problems with varying $(\alpha,\beta)$ in Table \ref{norm_T_inverse}. From the Table, we can observe that $||T^{-1}||_2$ is approximately bounded by quantity $0.7071$ in all the scenarios. Additionally, we analyze the condition number of $T$ for all the test problems, as presented in Table \ref{Condition_number}. This comparison reveals that the algebraic equations exhibit favorable conditioning. Consequently, proposed method demonstrate numerical stability.
\begin{table}[h]
  \centering
  \caption{$||T^{-1}||_2$ across various test problems.}
  \resizebox{10cm}{2cm}{
    \begin{tabular}{|c|c|c|c|c|c|c|c|}
    \hline
    \multicolumn{1}{|c|}{Test Case} & $(\alpha, \beta)$ & $J=1$   & $J=2$   & $J=3$   & $J=4$   & $J=5$   & $J=6$ \\\hline
     \multirow{3}[0]{*}{Test Case $1$} & (1.75, 0.75) & 0.854193 & 0.730985 & 0.712662 & 0.708603 & 0.707579 & 0.707267 \\\cline{2-8}
          & (1.85, 0.85) & 0.738998 & 0.71568 & 0.709454 & 0.707768 & 0.707306 & 0.707169 \\\cline{2-8}
          & (1.95, 0.95) & 0.712453 & 0.709969 & 0.707943 & 0.707335 & 0.707168 & 0.707123 \\\hline
    \multirow{3}[0]{*}{Test Case $2$} & (1.75, 0.75) & 0.660499 & 0.695483 & 0.704028 & 0.706289 & 0.706891 & 0.70705 \\\cline{2-8}
          & (1.85, 0.85) & 0.658077 & 0.695427 & 0.704098 & 0.706328 & 0.706906 & 0.707055 \\\cline{2-8}
          & (1.95, 0.95) & 0.655434 & 0.695305 & 0.704136 & 0.706352 & 0.706916 & 0.707059 \\\hline
    \multirow{3}[0]{*}{Test Case $3$} & (1.75, 0.75) & 0.713526 & 0.710291 & 0.708267 & 0.707518 & 0.707245 & 0.707152 \\\cline{2-8}
          & (1.85, 0.85) & 0.707934 & 0.708678 & 0.707701 & 0.707308 & 0.707172 & 0.707127 \\\cline{2-8}
          & (1.95, 0.95) & 0.702376 & 0.707187 & 0.707245 & 0.707159 & 0.707123 & 0.707112 \\\hline
    \end{tabular}}
  \label{norm_T_inverse}
\end{table}

\begin{table}[h]
  \centering
  \caption{$\kappa(T)$ across various test problems.}
  \resizebox{10cm}{2cm}{
    \begin{tabular}{|c|c|c|c|c|c|c|c|}
    \hline
    \multicolumn{1}{|c|}{Test Case} & $(\alpha, \beta)$ & $J=1$   & $J=2$   & $J=3$   & $J=4$   & $J=5$   & $J=6$ \\\hline
    \multirow{3}[0]{*}{Test Case $1$} & (1.75, 0.75) & 3.33923 & 4.0592 & 5.6065 & 7.88737 & 11.1397 & 15.7475 \\\cline{2-8}
          & $(1.85, 0.85)$ & 2.52957 & 3.53015 & 4.97638 & 7.03037 & 9.93869 & 14.0532 \\\cline{2-8}
          & $(1.95, 0.95)$ & 2.29839 & 3.4011 & 4.89078 & 6.9466 & 9.8322 & 13.9052 \\\hline
    \multirow{3}[0]{*}{Test Case $2$} & (1.75, 0.75) & 4.66365 & 6.93675 & 9.92064 & 14.0675 & 19.9066 & 28.1556 \\\cline{2-8}
          & $(1.85, 0.85)$ & 4.90846 & 7.34171 & 10.5068 & 14.8991 & 21.0824 & 29.8175 \\\cline{2-8}
          & $(1.95, 0.95)$ & 5.20958 & 7.85523 & 11.2621 & 15.9788 & 22.6138 & 31.9851 \\\hline
    \multirow{3}[0]{*}{Test Case $3$} & (1.75, 0.75) & 1.88252 & 2.65677 & 3.74745 & 5.29371 & 7.48303 & 10.5808 \\\cline{2-8}
          & $(1.85, 0.85)$ & 2.05556 & 2.92168 & 4.12684 & 5.83089 & 8.24234 & 11.654 \\\cline{2-8}
          & $(1.95, 0.95)$ & 2.28755 & 3.29012 & 4.66487 & 6.59901 & 9.3316 & 13.1956 \\\hline
    \end{tabular}}
  \label{Condition_number}
\end{table}

\section{Numerical Illustration}\label{P3_examples}
\textbf{Residual Error:}
 Residual error is denoted and defined as,
$$ E_{res} = \max_{[0,1]}\left|D^\alpha y_M(x) + \frac{\lambda}{x^\beta} D^\beta y_M(x) + f(y_M)\right|,$$
where $y_M(x)$ is the Haar solution.\\
\textbf{Rate of Convergence:} The rate of convergence is determined by the following formula,
$$RoC = \frac{\log{[E_{res}^{2M}/E_{res}^{4M}]}}{\log{2}},$$
where $E_{res}^{2M}$ and $E_{res}^{4M}$ represent the maximum residual errors obtained with $2M$ and $4M,$ collocation points respectively.
\subsection{Test Case 1}\label{P3Test1}
We consider the fractional Lane-Emden equation,
\begin{equation}\label{P3_4_1}
D^\alpha y(x) + \frac{2}{x^\beta} D^\beta y(x) + y^5(x) =0, ~~~~~~~~ ~~1 < \alpha \leq 2, ~~ 0 < \beta \leq 1,~0< x <1,
\end{equation}
subject to boundary condition, $$y'(0) =0, ~~~ y(1) = \frac{\sqrt{3}}{2}.$$
We present our findings in various tables and figures. Table \ref{tab:table1} details the residual error for $J=6$ across various combinations of $\alpha$ and $\beta$. Further insights are provided in Tables \ref{tab:table2}, \ref{tab:table3}, and \ref{tab:table4}, where we show the residual error for different values of $J$ at $\alpha =1.85, 1.95, 1.99$ and $\beta =0.85, 0.95, 0.99$. For a visual representation, we illustrate the solution of problem \eqref{P3_4_1} in Figure \ref{fig1}, showcasing different $\alpha$ and $\beta$ values at $J=6$. Notably, as $\alpha$ approaches 2 and $\beta$ approaches 1, our numerical solution becomes very similar to the exact solution for $\alpha =2$ and $\beta =1$. These observations are reinforced by the corresponding residual errors depicted in Figures \ref{fig2}, \ref{fig3}, \ref{fig4}, and \ref{fig5}, affirming the accuracy of our approach. Based on the tables and plots, it is evident that the residual error decreases as the value of $J$ increases for the given values of $\alpha =1.99, 1.95, \mbox{and}~ 1.85$ and $\beta =0.99, 0.95, \mbox{and}~ 0.85$. This trend is also observed when $J$ is fixed at $6$ with varying values of $\alpha$ and $\beta.$
\begin{table}[H]
  \begin{center}
    \caption{Analyzing residual error behavior in problem \ref{P3Test1} under varying $\alpha$ and $\beta$ conditions ($J=6$).}
    \label{tab:table1}
  \resizebox{0.999\textwidth}{!}{
    \begin{tabular}{|c|c|c|c|c|c|c|}
    \hline
       $(\alpha, ~ \beta)$ & $(1.55, 0.55)$  & $(1.65, 0.65 )$& $(1.75, 0.75)$ & $(1.85, 0.85)$ & $(1.95 ,0.95)$ & $(1.99, 0.99)$ \\
      \hline
      $E_{res}$ &  0.00879432 & 0.00708953 & 0.00474836 & 0.00270619 & 0.00151107 & 0.00142038 \\
        \hline
    \end{tabular}}
  \end{center}
\end{table}
\begin{table}[H]
\begin{center}
\caption{Comparing residual errors in problem \ref{P3Test1} across various $J$ values $(\alpha =1.85, \beta=0.85)$.}
\label{tab:table2}
\resizebox{0.999\textwidth}{!}{
\begin{tabular}{|c|c|c|c|c|c|c|c|c|c|c|}
\hline
$J$ & 1 & 2 & 3 & 4 & 5 & 6 & 7 & 8 & 9 & 10\\
\hline
$E_{res}$ & $0.0562849$	& $0.0344076$	& $0.01961$	& $0.0103913$	& $0.00534055$	& $0.00270619$	& $0.00136202$	& $0.000683228$	& $0.000342167$	& $0.000171222$\\
\hline
RoC & & $0.710021$	& $0.811138$	& $0.916213$	& $0.960316$	& $0.980725$	&$0.990515$	& $0.995309$	& $0.997666$	& $0.998833$
\\
\hline
\end{tabular}
}
\end{center}
\end{table}

\begin{table}[H]
\begin{center}
\caption{Comparing residual errors in problem \ref{P3Test1} across various $J$ values $(\alpha =1.95, \beta=0.95)$.}
\label{tab:table3}
\resizebox{0.999\textwidth}{!}{
\begin{tabular}{|c|c|c|c|c|c|c|c|c|c|c|}
\hline
$J$ & 1 & 2 & 3 & 4 & 5 & 6 & 7 & 8 & 9 & 10\\
\hline
$E_{res}$ & 0.0381916	&0.0216896	&0.011502	&0.0059178	&0.00300099	&0.00151107	&0.000758186 &0.000379757 &0.000190044 &9.50637E-05
\\
\hline
RoC & & 	0.816252	&0.915119	&0.958752	&0.979622	&0.989868	&0.994947	&0.997475	&0.998743	&0.999367\\
\hline
\end{tabular}
}
\end{center}
\end{table}
\begin{table}[H]
\begin{center}
\caption{Comparing residual errors in problem \ref{P3Test1} across various $J$ values $(\alpha =1.99, \beta=0.99)$.}
\label{tab:table4}
\resizebox{0.999\textwidth}{!}{
\begin{tabular}{|c|c|c|c|c|c|c|c|c|c|c|}
\hline
$J$ & 1 & 2 & 3 & 4 & 5 & 6 & 7 & 8 & 9 & 10\\
\hline
$E_{res}$ &0.0367294	&0.0205913	&0.0108599	&0.00557306	&0.00282262	&0.00142038	&0.000712461	&0.0003568	&0.000178542	&8.93067E-05\\
\hline
RoC & & 	0.8349	&0.923024	&0.962469	&0.981435	&0.990758	&0.995394	&0.997695	&0.998852	&0.999423\\
\hline
\end{tabular}
}
\end{center}
\end{table}
\begin{figure}[H]
\begin{tabular}{p{8.5cm}p{8.5cm}}
\includegraphics[height=8cm]{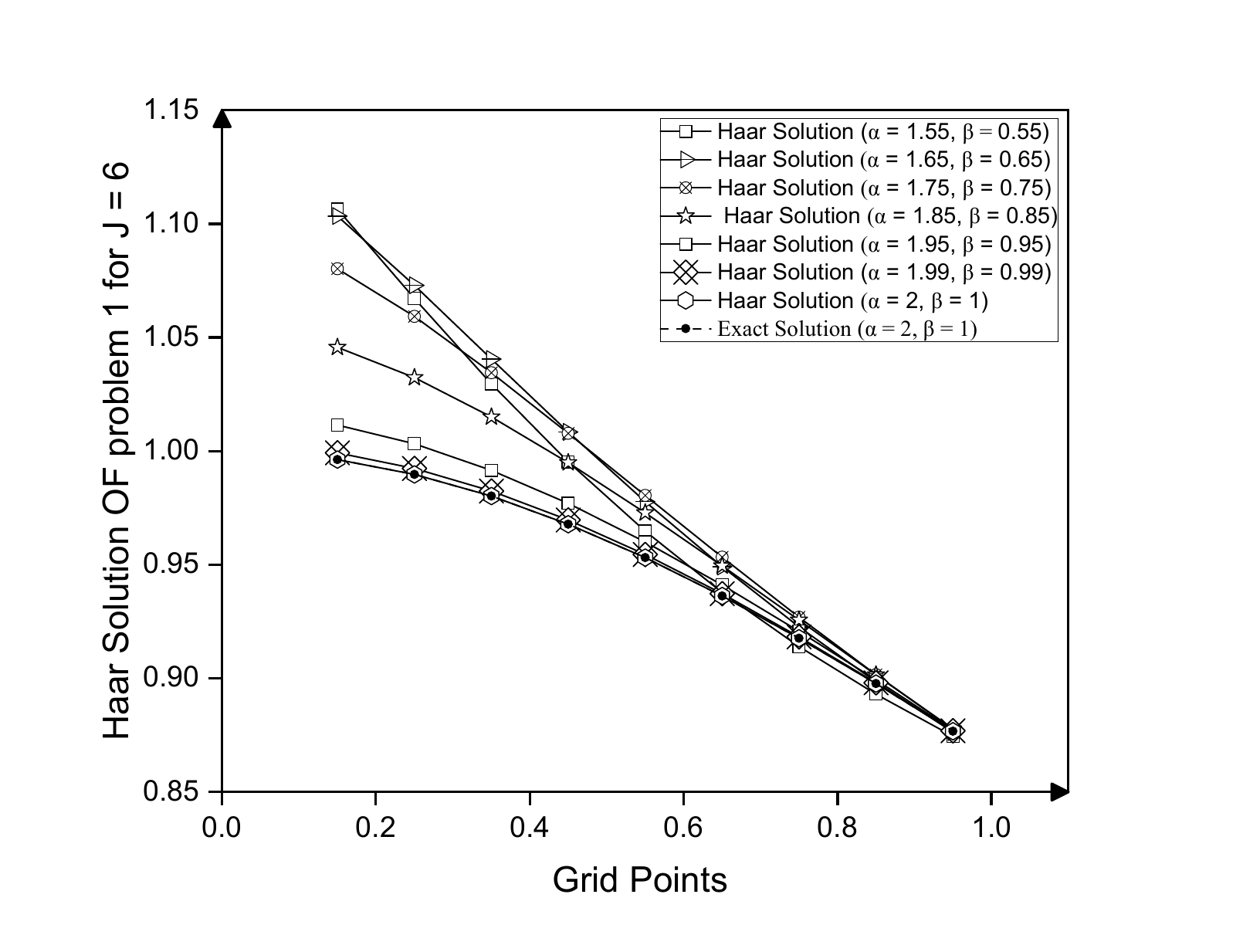}
\caption{Haar solution for problem \ref{P3Test1} at $J=6.$}
\label{fig1}
&
\includegraphics[height=8cm]{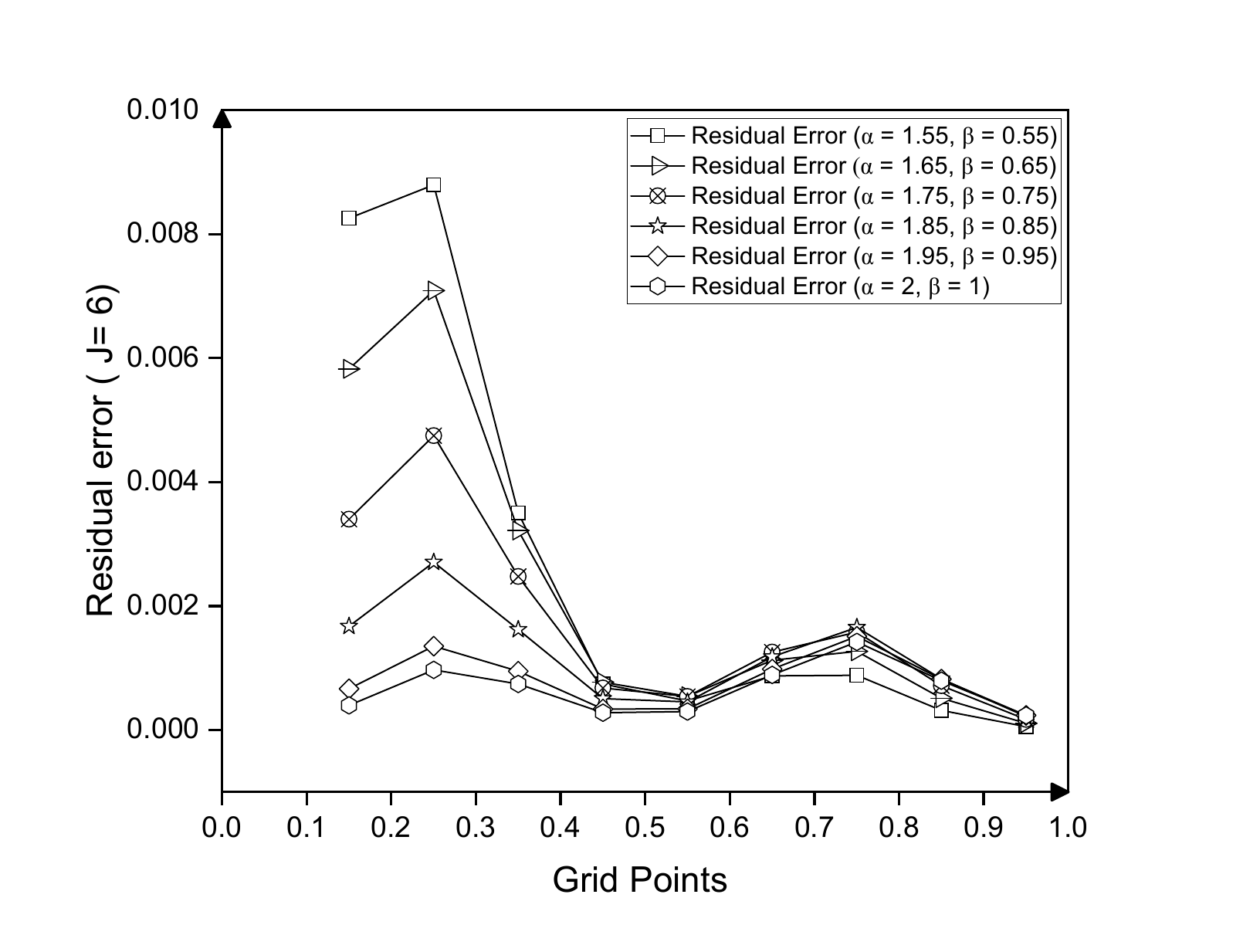}
\caption{Residual error for problem \ref{P3Test1} at $J=6.$}
\label{fig2}
\end{tabular}
\end{figure}
\begin{figure}[H]
\begin{tabular}{p{8.5cm}p{8.5cm}}
\includegraphics[height=8cm]{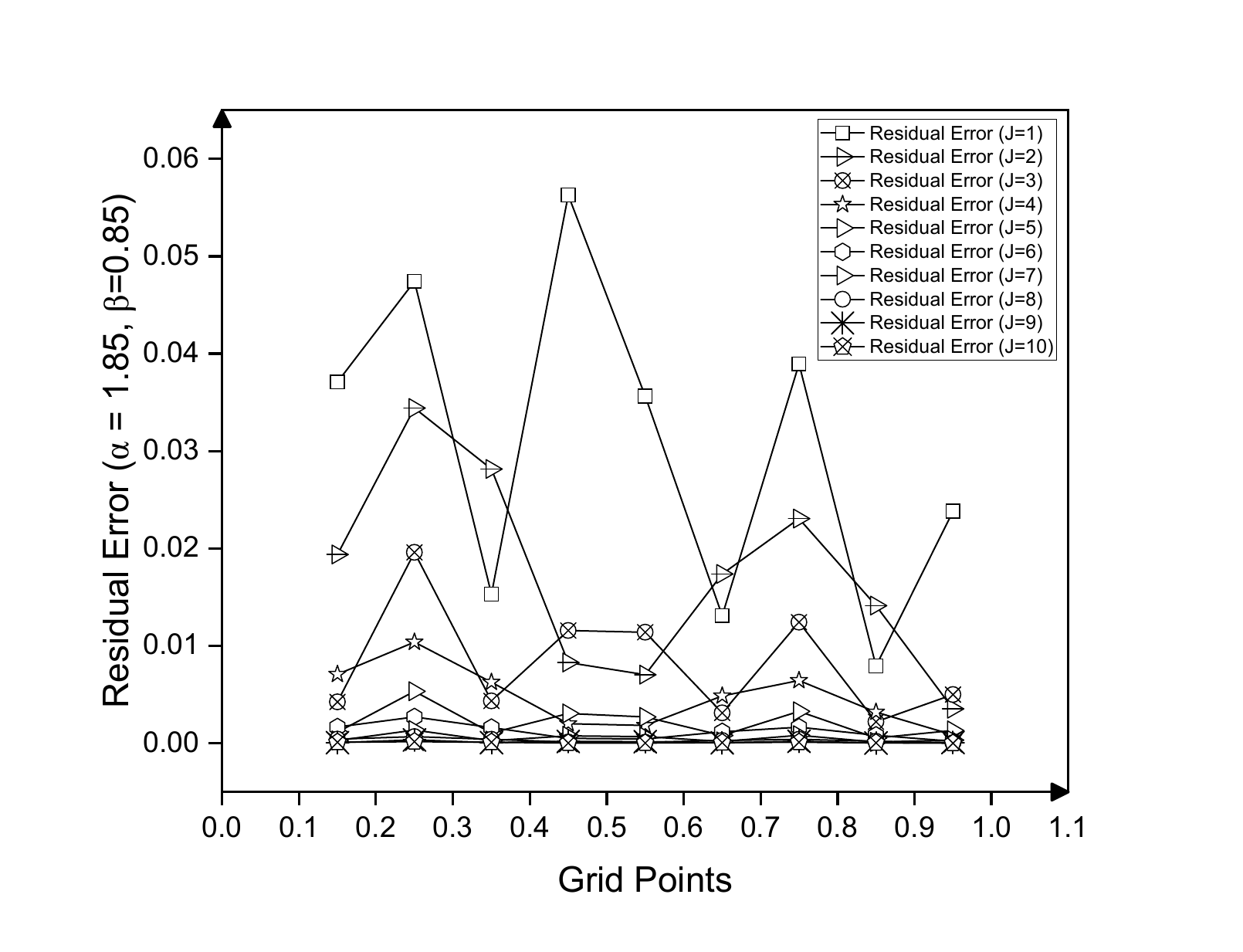}
\caption{Residual error for problem \ref{P3Test1} at $\alpha =1.85, \beta =0.85.$}
\label{fig3}
&
\includegraphics[height=8cm]{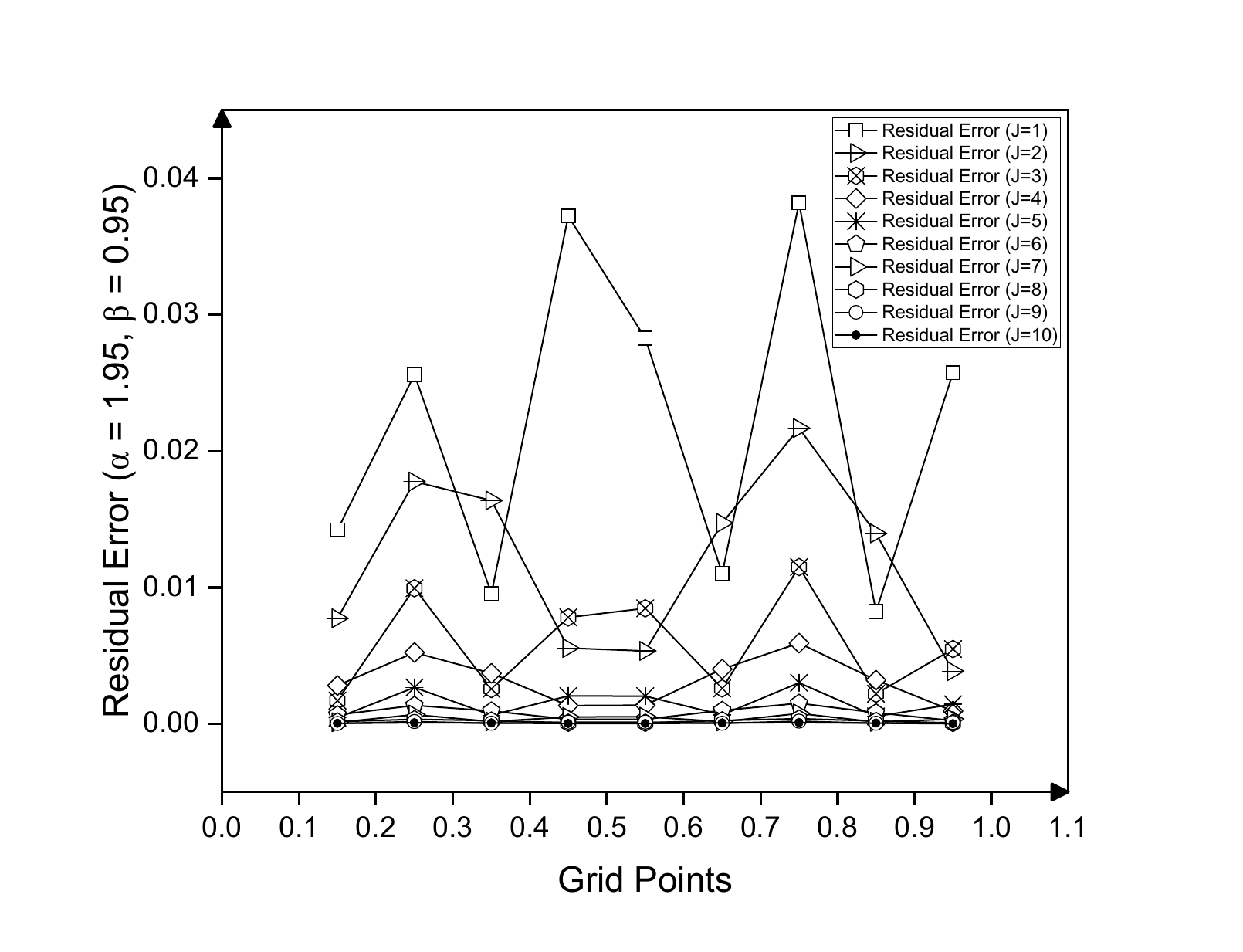}
\caption{Residual error for problem \ref{P3Test1} at $\alpha =1.95, \beta =0.95.$}
\label{fig4}
\end{tabular}
\end{figure}

\begin{figure}[H]
\begin{center}
\includegraphics[height=8cm]{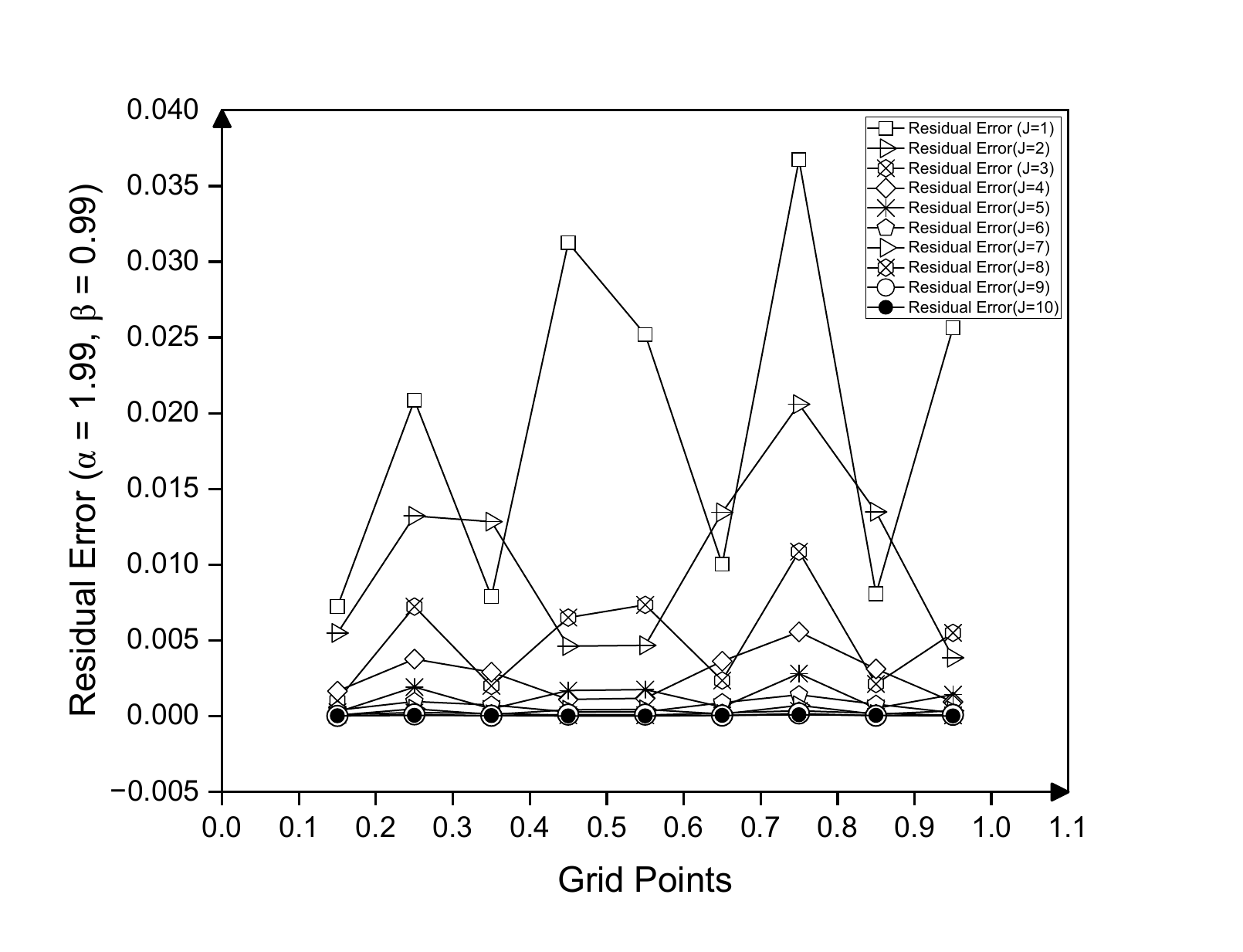}
\caption{Residual error for problem \ref{P3Test1} at $\alpha =1.99, \beta =0.99.$}
\label{fig5}
\end{center}
\end{figure}

\subsection{Test Case 2}\label{P3Test2}
We consider the fractional Lane-Emden equation,
\begin{equation}\label{P3_4_2}
D^\alpha y(x) + \frac{2}{x^\beta} D^\beta y(x) + \exp{(-y(x))} =0, ~~~~~~~~~1 < \alpha \leq 2, ~~ 0 < \beta \leq 1,~0< x <1,
\end{equation}
subject to bounadry condition, $$y'(0) =0, ~~~ 2y(1)+ y'(1) =0.$$
We have included a set of tables and figures to provide a comprehensive view of the residual errors for problem \eqref{P3_4_2}. In table \ref{tab:table5}, we list the residual error for $J=6,$ at different values of $\alpha$ and $\beta.$ In table \ref{tab:table6}, we list the residual error for different value of $J,$ at $\alpha =1.85, \beta =0.85.$ In table \ref{tab:table7}, we list the residual error for different value of $J$, at $\alpha =1.95, \beta =0.95.$ In table \ref{tab:table8}, we list the residual error for different value of $J$ at $\alpha =1.99, \beta =0.99.$ We have also included Figure \ref{fig6}, which illustrates the solution for problem \eqref{P3_4_2} for varying values of $\alpha$ and $\beta$. The corresponding residual errors are shown in Figures \ref{fig7}, \ref{fig8}, \ref{fig9}, and \ref{fig10}. Our goal is to provide a clear and comprehensive understanding of the residual errors for problem \eqref{P3_4_2}, and we believe that these tables and figures will be of great assistance. Based on the tables and plots, it is evident that the residual error decreases as the value of $J$ increases for the given values of $\alpha =1.99, 1.95, \mbox{and}~ 1.85$ and $\beta =0.99, 0.95, \mbox{and}~ 0.85$. This trend is also observed when $J$ is fixed at $6$ with varying values of $\alpha$ and $\beta.$
\begin{table}[H]
  \begin{center}
    \caption{Analyzing residual error behavior in problem \ref{P3Test2} under varying $\alpha$ and $\beta$ conditions ($J=6$).}
    \label{tab:table5}
    \resizebox{0.99\textwidth}{!}{
    \begin{tabular}{|c|c|c|c|c|c|c|c|}
    \hline
         $(\alpha, ~ \beta)$ & $(1.55, 0.55)$  & $(1.65, 0.65 )$& $(1.75, 0.75)$ & $(1.85, 0.85)$ & $(1.95 ,0.95)$ & $(1.99, 0.99)$ & $(2, 1)$\\
      \hline
      $E_{res}$ &0.00121633	&0.000921754	&0.000624467	&0.000343944	&0.000358532	&0.00038494	&0.000391484 \\
        \hline
    \end{tabular}}
  \end{center}
\end{table}

\begin{table}[H]
\begin{center}
\caption{Comparing residual errors in problem \ref{P3Test2} across various $J$ values $(\alpha =1.85, \beta=0.85)$.}
\label{tab:table6}
\resizebox{0.999\textwidth}{!}{
\begin{tabular}{|c|c|c|c|c|c|c|c|c|c|c|}
\hline
$J$ & 1 & 2 & 3 & 4 & 5 & 6 & 7 & 8 & 9 & 10\\
\hline
$E_{res}$ & 0.0110687	&0.00467641	&0.00232679	&0.00149789	&0.000580376	&0.000343944	&0.000145045	&8.86239E-05	&3.62587E-05	&2.20037E-05\\
\hline
RoC & &	1.24301	&1.00706	&0.635409	&1.36787	&0.754814	&1.24567	&0.710733	&1.28937	&0.720581\\
\hline
\end{tabular}
}
\end{center}
\end{table}

\begin{table}[H]
\begin{center}
\caption{Comparing residual errors in problem \ref{P3Test2} across various $J$ values $(\alpha =1.95, \beta=0.95)$.}
\label{tab:table7}
\resizebox{0.999\textwidth}{!}{
\begin{tabular}{|c|c|c|c|c|c|c|c|c|c|c|}
\hline
$J$ & 1 & 2 & 3 & 4 & 5 & 6 & 7 & 8 & 9 & 10\\
\hline
$E_{res}$ & 0.0110906	&0.00563392	&0.00284327	&0.00142867	&0.000716145	&0.000358532	&0.000179382	&8.97201E-05	&4.48673E-05	&2.24355E-05\\
\hline
RoC & &	0.977126	&0.986588	&0.992878	&0.996349	&0.99815	&0.999067	&0.999532	&0.999767	&0.999881\\
\hline
\end{tabular}
}
\end{center}
\end{table}

\begin{table}[H]
\begin{center}
\caption{Comparing residual errors in problem \ref{P3Test2} across various $J$ values $(\alpha =1.99, \beta=0.99)$.}
\label{tab:table8}
\resizebox{0.999\textwidth}{!}{
\begin{tabular}{|c|c|c|c|c|c|c|c|c|c|c|}
\hline
$J$ & 1 & 2 & 3 & 4 & 5 & 6 & 7 & 8 & 9 & 10\\
\hline
$E_{res}$ &0.0116877	&0.00600048	&0.00304198	&0.00153166	&0.000768526	&0.00038494	&0.000192639	&0.000096362	&4.81916E-05	&2.40985E-05\\
\hline
RoC & &	0.961841	&0.980067	&0.989915	&0.99493	&0.99746	&0.998734	&0.999364	&0.999683	&0.999838\\
\hline
\end{tabular}
}
\end{center}
\end{table}

\begin{figure}[H]
\begin{tabular}{p{8.5cm}p{8.5cm}}
\includegraphics[height=8cm]{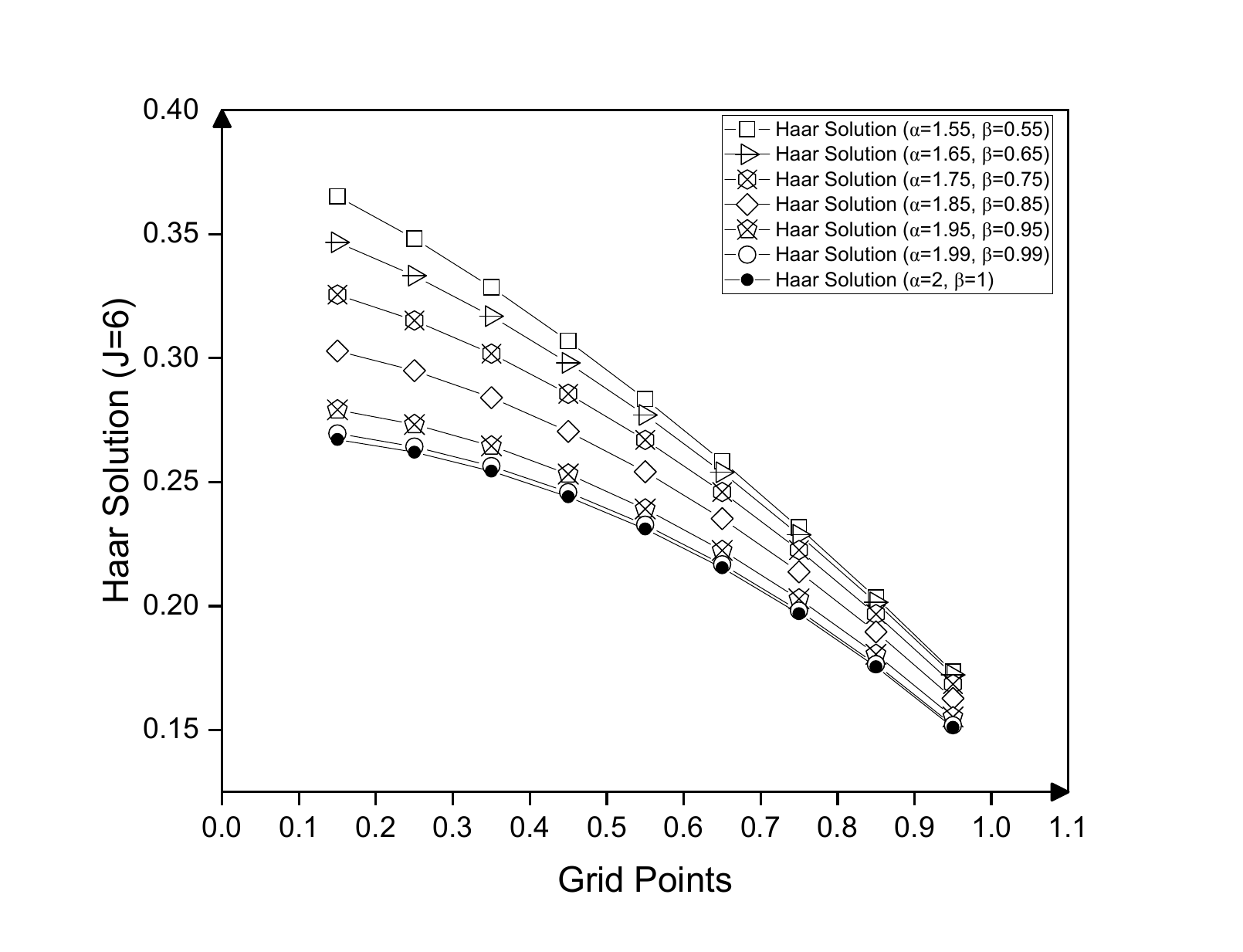}
\caption{Haar solution for problem \ref{P3Test2} at $J=6.$}
\label{fig6}
&
\includegraphics[height=8cm]{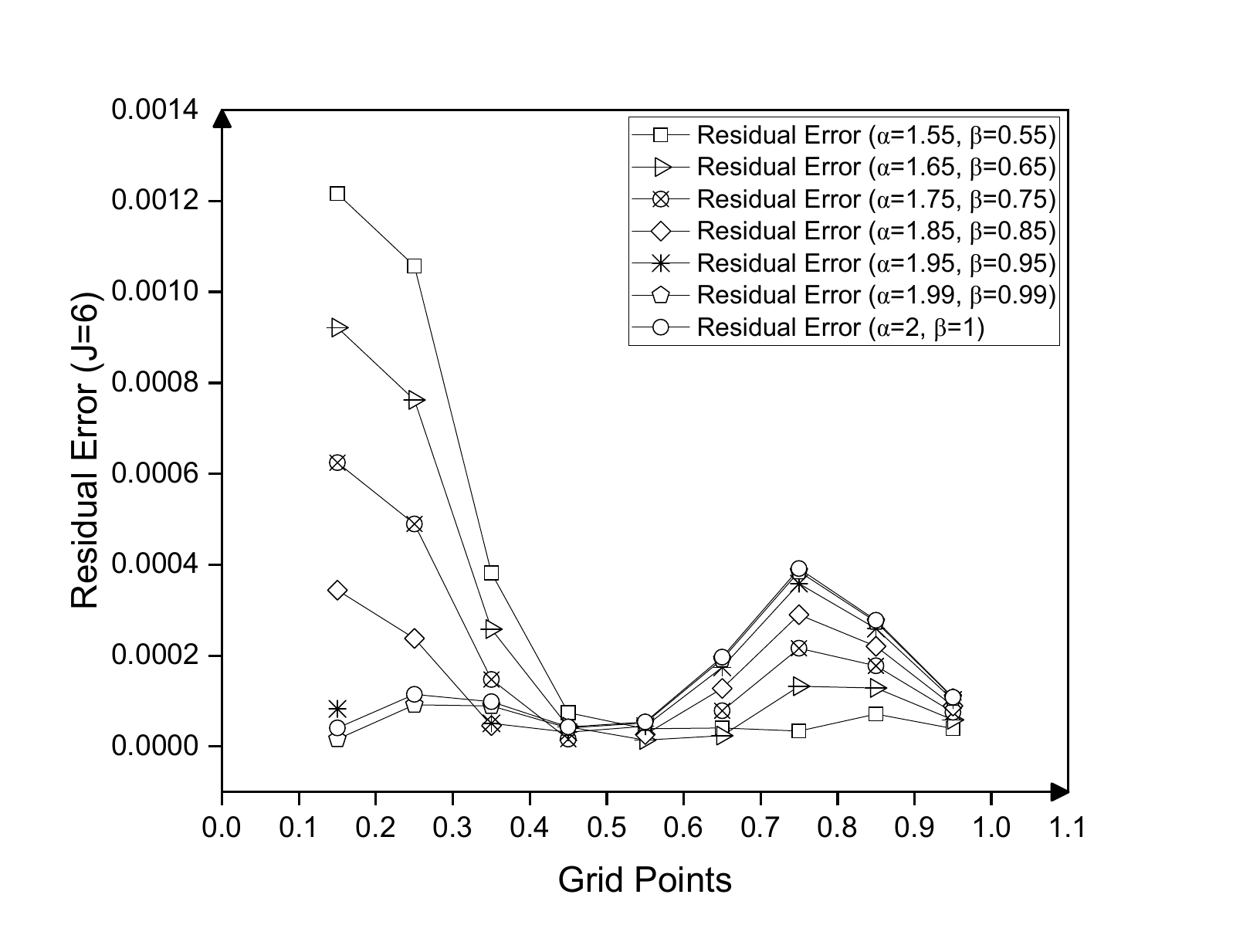}
\caption{Residual error for problem \ref{P3Test2} at $J=6.$}
\label{fig7}
\end{tabular}
\end{figure}
\begin{figure}[H]
\begin{tabular}{p{8.5cm}p{8.5cm}}
\includegraphics[height=8cm]{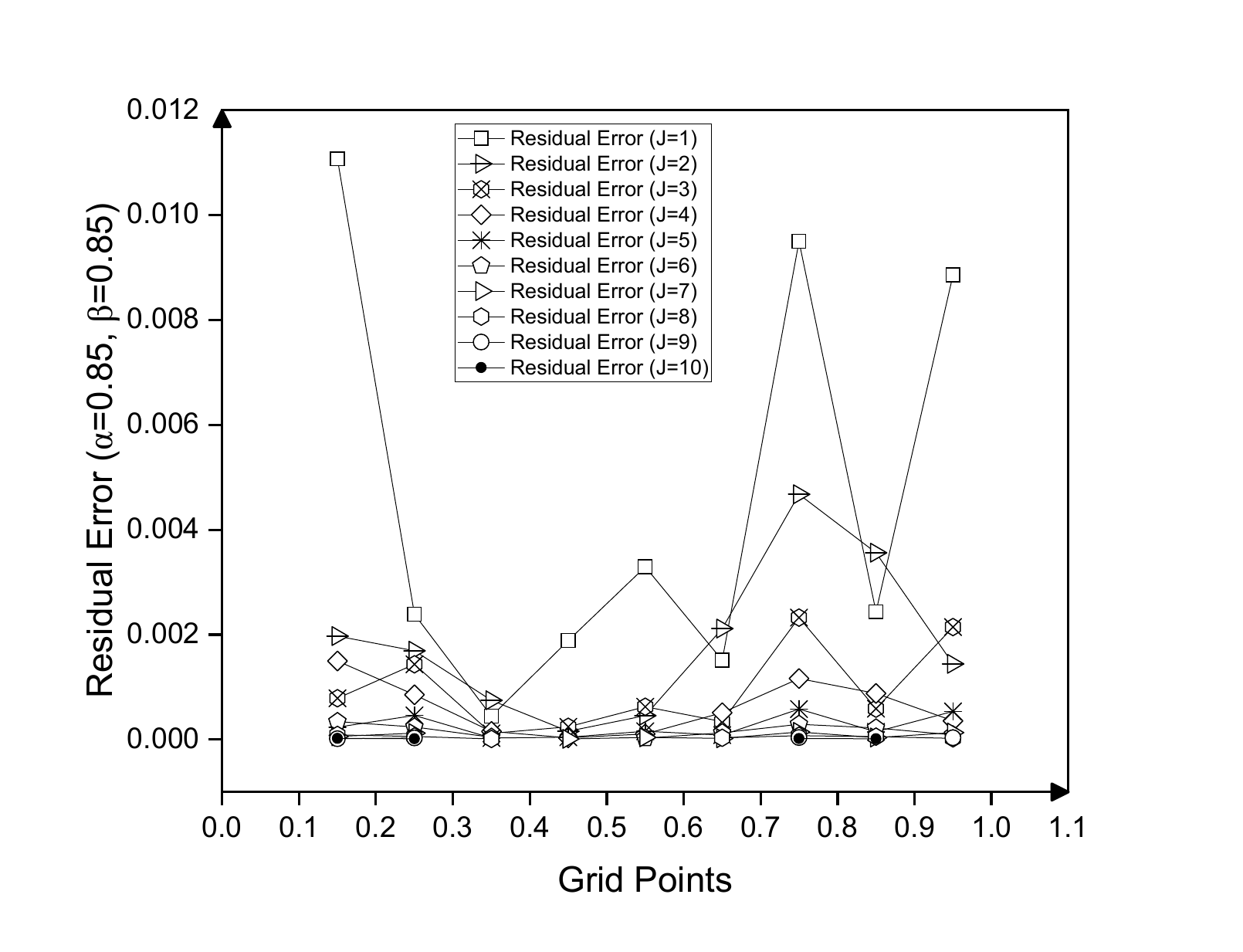}
\caption{Residual error for problem \ref{P3Test2} at $\alpha =1.85, \beta =0.85.$}
\label{fig8}
&
\includegraphics[height=8cm]{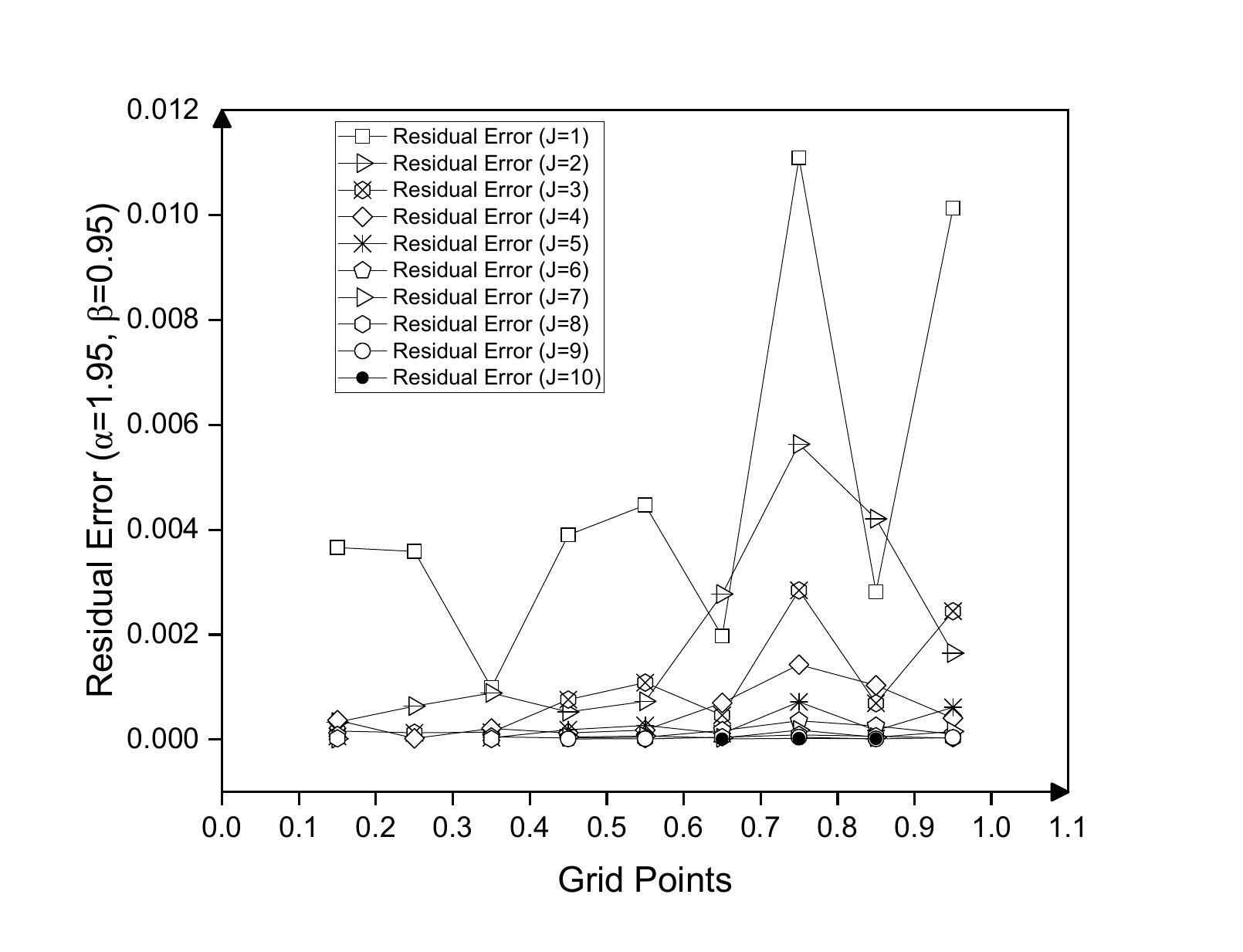}
\caption{Residual error for problem \ref{P3Test2} at $\alpha =1.95, \beta =0.95.$}
\label{fig9}
\end{tabular}
\end{figure}

\begin{figure}[H]
\begin{center}
\includegraphics[height=8cm]{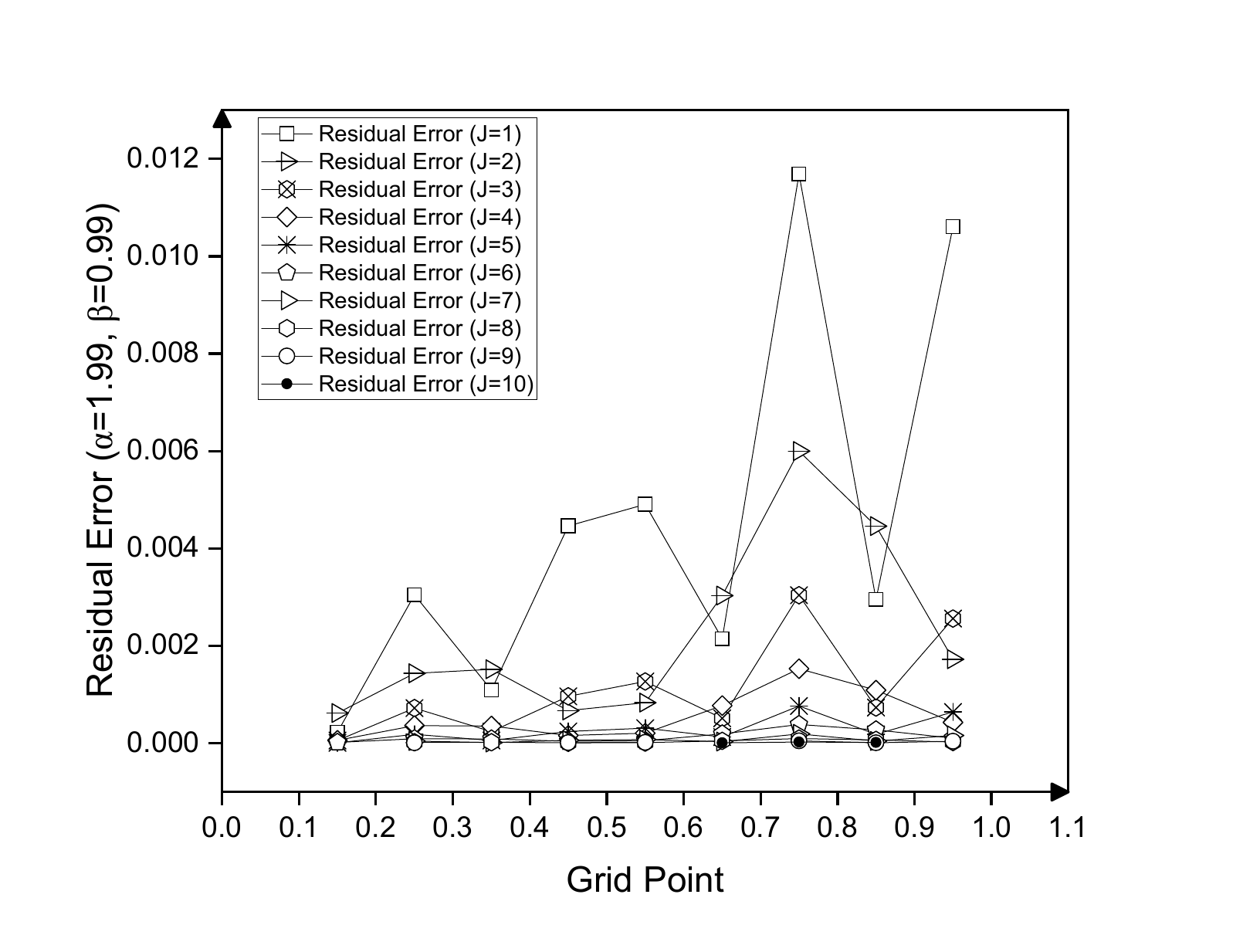}
\caption{Residual error for problem \ref{P3Test2} at $\alpha =1.99, \beta =0.99.$}
\label{fig10}
\end{center}
\end{figure}

\subsection{Test Case 3}\label{P3Test3}
We consider the fractional Lane-Emden equation
\begin{equation}\label{P3_4_3}
D^\alpha y(x) + \frac{1}{x^\beta} D^\beta y(x) + \exp{(y(x))} =0, ~~~~~~~~~1 < \alpha \leq 2, ~~ 0 < \beta \leq 1,~0< x <1,
\end{equation}
subject to boundary condition, $$y'(0) =0, ~~~ y(1) =0.$$
Our results have been presented in a clear and concise manner, demonstrating the accuracy of our scheme. We have compiled all the data in Tables \ref{tab:table9}, \ref{tab:table10}, \ref{tab:table11}, and \ref{tab:table12}, which show the residual error at various values of $J$, $\alpha$, and $\beta$, like Table \ref{tab:table9}, display the residual error at different value of $\alpha$ and $\beta,$ for fixed value of $J=6.$ Table \ref{tab:table10},\ref{tab:table11}, \ref{tab:table12}, display the residual error for different values of $J$ at $\alpha =1.85, 1.95, 1.99$ and $\beta =0.85, 0.95, 0.99.$ These tables provide a strong foundation for our findings. Additionally, we have plotted the solution of the problem in Figure \ref{fig11}, which provides a graphical representation of our results. It is evident that our numerical solution is very close to the exact solution for $\alpha =2$ and $\beta =1$, which clearify by Figures \ref{fig12}, \ref{fig13}, \ref{fig14}, and \ref{fig15}. These results are highly reliable and can be used to support future research in this area.

\begin{table}[H]
  \begin{center}
    \caption{Analyzing residual error behavior in problem \ref{P3Test3} under varying $\alpha$ and $\beta$ conditions ($J=6$).}
    \label{tab:table9}
    \resizebox{0.99\textwidth}{!}{
    \begin{tabular}{|c|c|c|c|c|c|c|}
    \hline
        $(\alpha, ~ \beta)$ & $(1.55, 0.55)$  & $(1.65, 0.65 )$& $(1.75, 0.75)$ & $(1.85, 0.85)$ & $(1.95 ,0.95)$ & $(1.99, 0.99)$ \\
      \hline
      $E_{res}$ & 0.00648987	&0.00479111	&0.00329462	&0.0020617	&0.0016966	&0.00155277\\
        \hline
    \end{tabular}}
  \end{center}
\end{table}

\begin{table}[H]
\begin{center}
\caption{Comparing residual errors in problem \ref{P3Test3} across various $J$ values $(\alpha =1.85, \beta=0.85)$.}
\label{tab:table10}
\resizebox{0.999\textwidth}{!}{
\begin{tabular}{|c|c|c|c|c|c|c|c|c|c|c|}
\hline
$J$ & 1 & 2 & 3 & 4 & 5 & 6 & 7 & 8 & 9 & 10\\
\hline
$E_{res}$ &0.059812	&0.0314603	&0.016125	&0.0081619	&0.00410586	&0.0020617	&0.0010345	&0.000518159	&0.000259307	&0.00012971\\
\hline
RoC & & 0.926903	&0.964233	&0.982322	&0.991221	&0.99385	&0.994901	&0.997467	&0.998734	&0.999371\\
\hline
\end{tabular}
}
\end{center}
\end{table}

\begin{table}[H]
\begin{center}
\caption{Comparing residual errors in problem \ref{P3Test3} across various $J$ values $(\alpha =1.95, \beta=0.95)$.}
\label{tab:table11}
\resizebox{0.999\textwidth}{!}{
\begin{tabular}{|c|c|c|c|c|c|c|c|c|c|c|}
\hline
$J$ & 1 & 2 & 3 & 4 & 5 & 6 & 7 & 8 & 9 & 10\\
\hline
$E_{res}$ & 0.0507101	&0.026287	&0.0133733	&0.00674377	&0.0033861	&0.0016966	&0.000849183	&0.000424812	&0.000212461	&0.000106244\\
\hline
RoC & & 0.947924	&0.974994	&0.987728	&0.993931	&0.996978	&0.998499	&0.999251	&0.999626	&0.999817\\
\hline
\end{tabular}
}
\end{center}
\end{table}
\begin{table}[H]
\begin{center}
\caption{Comparing residual errors in problem \ref{P3Test3} across various $J$ values $(\alpha =1.99, \beta=0.99)$.}
\label{tab:table12}
\resizebox{0.999\textwidth}{!}{
\begin{tabular}{|c|c|c|c|c|c|c|c|c|c|c|}
\hline
$J$ & 1 & 2 & 3 & 4 & 5 & 6 & 7 & 8 & 9 & 10\\
\hline
$E_{res}$ & 0.0470253	&0.0242131	&0.0122762	&0.00617993	&0.00310035	&0.00155277	&0.000777029	&0.000388676	&0.000194378	&9.71992E-05\\
\hline
RoC & & 0.957649	&0.979924	&0.990202	&0.995159	&0.997587	&0.998804	&0.9994	&0.999703	&0.999849\\
\hline
\end{tabular}
}
\end{center}
\end{table}

\begin{figure}[H]
\begin{tabular}{p{8.5cm}p{8.5cm}}
\includegraphics[height=8cm]{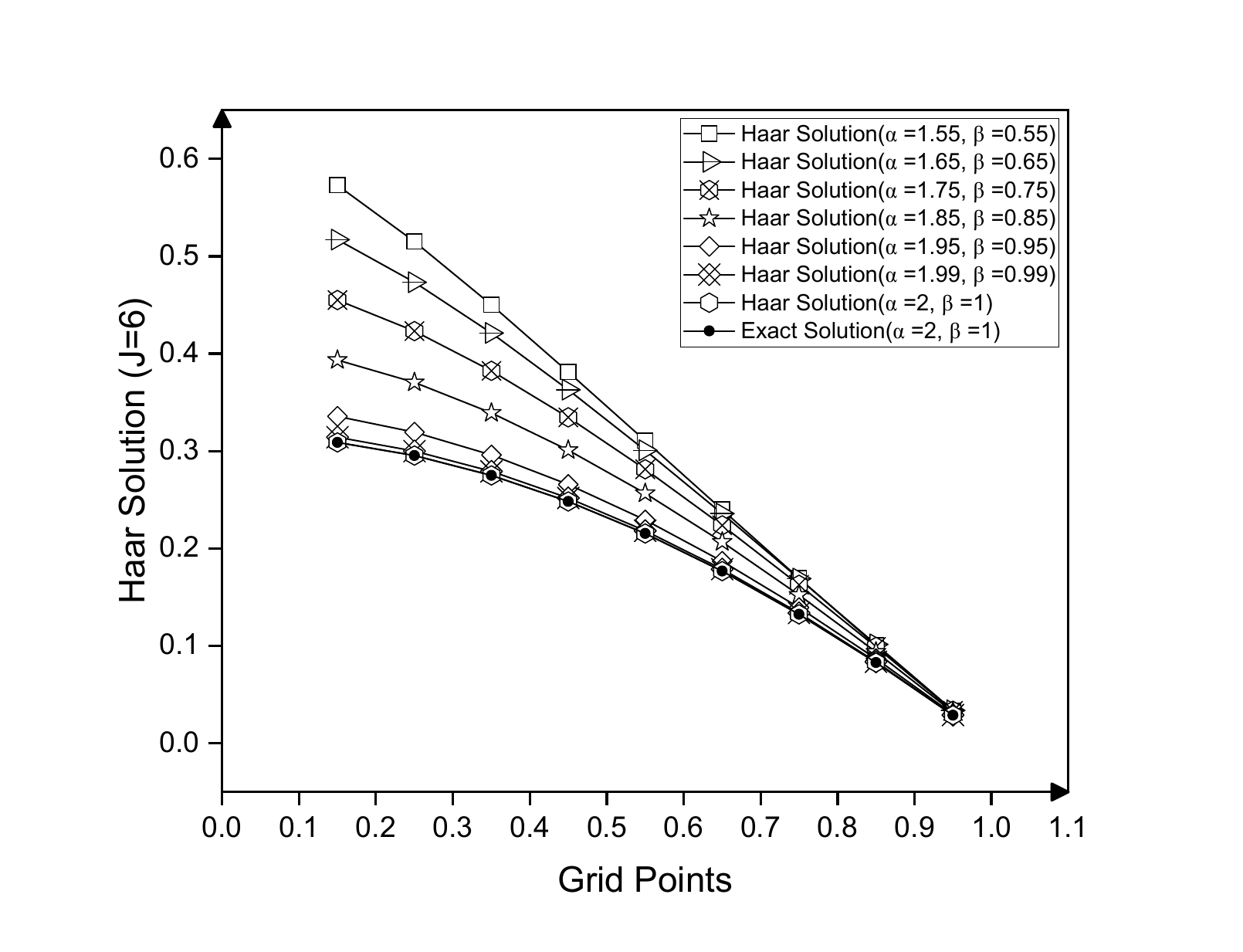}
\caption{Haar solution for problem \ref{P3Test3} for $J=6.$}
\label{fig11}
&
\includegraphics[height=8cm]{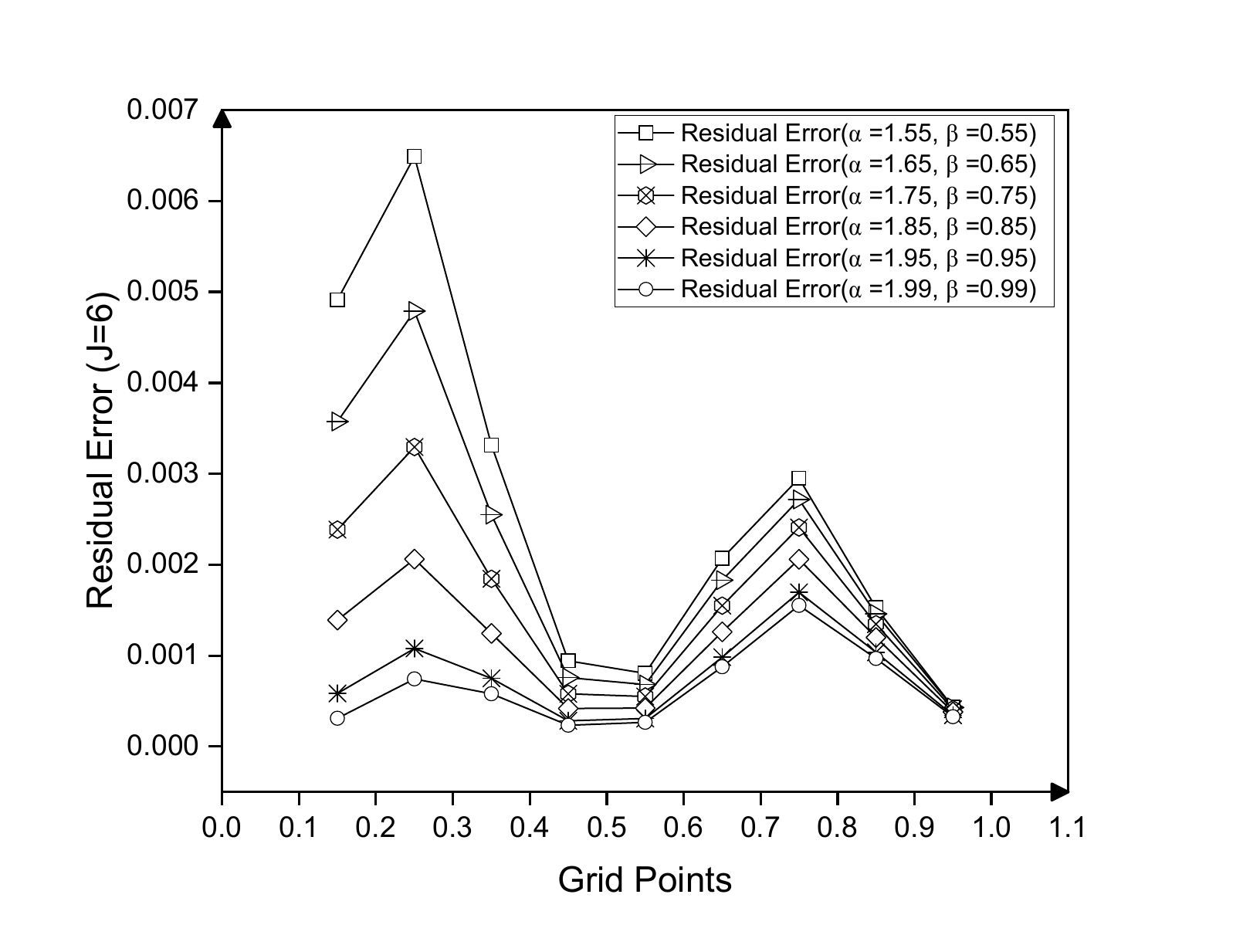}
\caption{Residual error for problem \ref{P3Test3} for $J=6.$}
\label{fig12}
\end{tabular}
\end{figure}
\begin{figure}[H]
\begin{tabular}{p{8.5cm}p{8.5cm}}
\includegraphics[height=8cm]{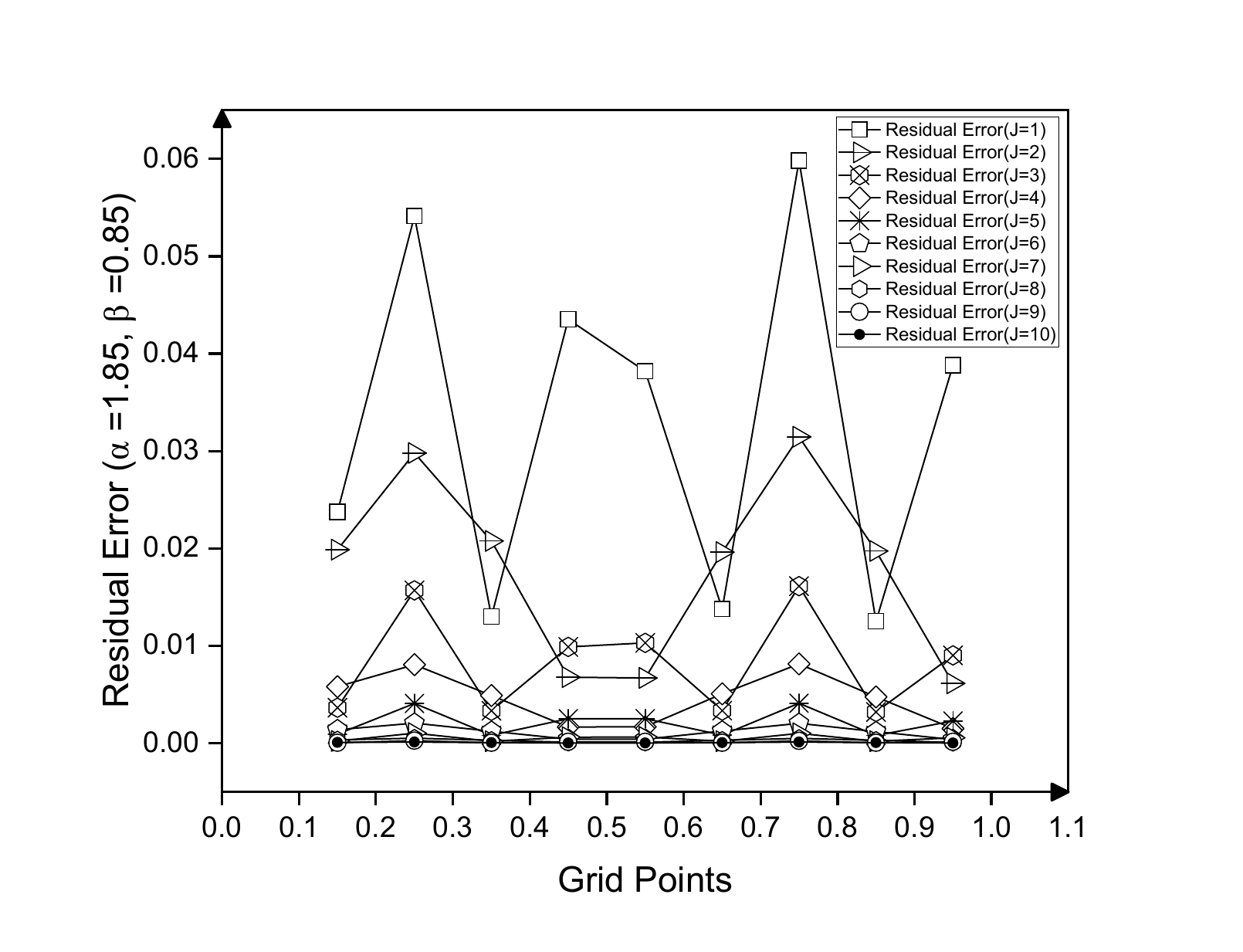}
\caption{Residual error for problem \ref{P3Test3} at $\alpha =1.85, \beta =0.85.$}
\label{fig13}
&
\includegraphics[height=8cm]{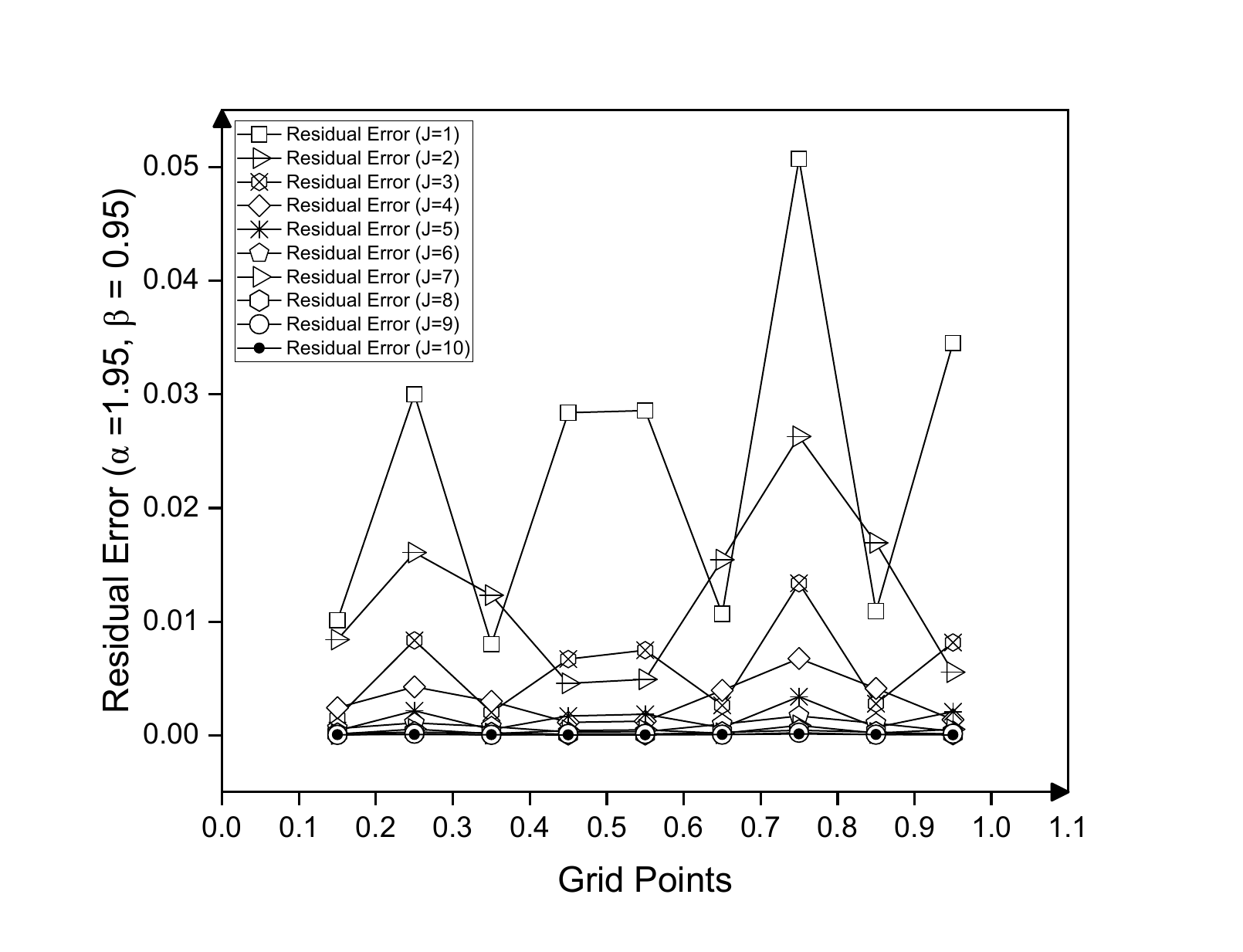}
\caption{Residual error for problem \ref{P3Test3} at $\alpha =1.95, \beta =0.95.$}
\label{fig14}
\end{tabular}
\end{figure}

\begin{figure}[H]
\begin{center}
\includegraphics[height=8cm]{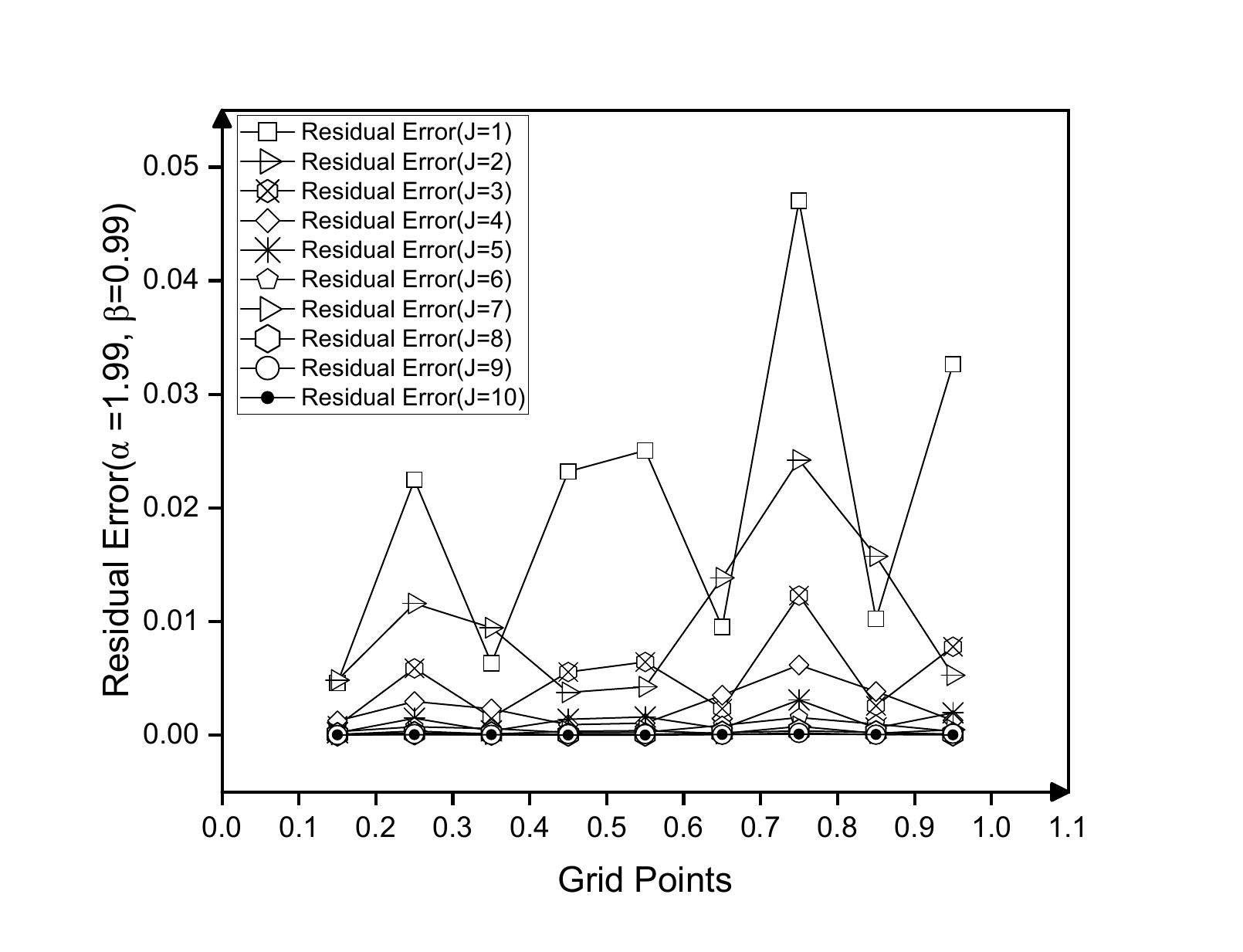}
\caption{Residual error for problem \ref{P3Test3} at $\alpha =1.99, \beta =0.99.$}
\label{fig15}
\end{center}
\end{figure}

\section{Conclusion}\label{P3_conclusion}
We have used the uniform fractional Haar wavelet collocation method to find the solutions for the class of fractional Lane-Emden equations \eqref{P3_1}. We conduct experiments on three test cases \ref{P3Test1}, \ref{P3Test2}, \ref{P3Test3} and conclude that as $(\alpha, \beta)$ approach $(2,1),$ the solutions of the fractional and classical Lane-Emden problems come closer to each other. Our findings highlight the effectiveness of our approach and the accuracy of our results. With clear and concise tables and figures, we have demonstrated the accuracy of our approach and the reliability of our results. Our results showcase the effectiveness of our approach and the accuracy of our findings. Issues related to convergence and stability has also been addressed in the paper. This paper shall lead to new thread of research in near future. 
\section{Acknowledgement}
The 2nd author is very much grateful to all the members of our research group at IIT Patna for their support and help. The work is financially supported by UGC - (December 2019)/2019(NET/CSIR-UGC), NTA Ref. No. 191620007135, New Delhi, India.

\section{Conflict of interest}
Authors don't have any conflict of interest to disclose.

\bibliography{LN3}

\begin{thebibliography}{10}

\bibitem{anderson1981complementary}
Neil Anderson and A.M. Arthurs.
\newblock Complementary extremum principles for a nonlinear model of heat
  conduction in the human head.
\newblock {\em Bulletin of Mathematical Biology}, 43:341--346, 1981.

\bibitem{bellman1965quasilinearization}
R.~E. Bellman and R.~E. Kalaba.
\newblock {\em Quasilinearization and nonlinear boundary-value problems}.
\newblock 1965.

\bibitem{chambre1952solution}
Paul .~L. Chambr{\'e}.
\newblock On the solution of the {P}oisson-{B}oltzmann equation with
  application to the theory of thermal explosions.
\newblock {\em The Journal of Chemical Physics}, 20(11):1795--1797, 1952.

\bibitem{chandrasekhar1939book}
Subrahmanyan Chandrasekhar.
\newblock Book review: An introduction to the study of stellar structure, by s.
  chandrasekhar, 1939.

\bibitem{chandrasekhar1957introduction}
Subrahmanyan Chandrasekhar.
\newblock {\em An introduction to the study of stellar structure}, volume~2.
\newblock Courier Corporation, 1957.

\bibitem{he1998nonlinear}
Ji-Huan He.
\newblock Nonlinear oscillation with fractional derivative and its
  applications.
\newblock In {\em International conference on vibrating engineering},
  volume~98, pages 288--291. Dalian, China, 1998.

\bibitem{he1999some}
Ji-Huan He.
\newblock Some applications of nonlinear fractional differential equations and
  their approximations.
\newblock {\em Bulletin of Science, Technology \& Society}, 15(2):86--90, 1999.

\bibitem{jumarie2006modified}
Guy Jumarie.
\newblock Modified {R}iemann-{L}iouville derivative and fractional taylor
  series of nondifferentiable functions further results.
\newblock {\em Computers \& Mathematics with Applications},
  51(9-10):1367--1376, 2006.

\bibitem{jumarie2007fractional}
Guy Jumarie.
\newblock Fractional partial differential equations and modified
  {R}iemann-{L}iouville derivative new methods for solution.
\newblock {\em Journal of Applied Mathematics and Computing}, 24:31--48, 2007.

\bibitem{jumarie2009table}
Guy Jumarie.
\newblock Table of some basic fractional calculus formulae derived from a
  modified {R}iemann--{L}iouville derivative for non-differentiable functions.
\newblock {\em Applied Mathematics Letters}, 22(3):378--385, 2009.

\bibitem{kilbas2006theory}
Anatoli~Aleksandrovich Kilbas, Hari~M Srivastava, and Juan~J Trujillo.
\newblock Theory and applications of fractional differential equations.
\newblock 204, 2006.

\bibitem{lepik2014haar}
{\"U}lo Lepik and Helle Hein.
\newblock Haar wavelets.
\newblock In {\em Haar wavelets: with applications}, pages 7--20. Springer,
  2014.

\bibitem{Randall_2007}
Randall~J. LeVeque.
\newblock {\em Finite Difference Methods for Ordinary and Partial Differential
  Equations}.
\newblock Society for Industrial and Applied Mathematics, 2007.

\bibitem{majak2015convergence}
J{\"u}ri Majak, BS~Shvartsman, Maarjus Kirs, Meelis Pohlak, and Henrik
  Herranen.
\newblock Convergence theorem for the haar wavelet based discretization method.
\newblock {\em Composite Structures}, 126:227--232, 2015.

\bibitem{mandelzweig2001quasilinearization}
V.~B. Mandelzweig and F.~Tabakin.
\newblock Quasilinearization approach to nonlinear problems in physics with
  application to nonlinear odes.
\newblock {\em Computer Physics Communications}, 141(2):268--281, 2001.

\bibitem{miller1993introduction}
Kenneth~S Miller and Bertram Ross.
\newblock An introduction to the fractional calculus and fractional
  differential equations.
\newblock 1993.

\bibitem{odibat2006approximations}
Zaid Odibat.
\newblock Approximations of fractional integrals and caputo fractional
  derivatives.
\newblock {\em Applied Mathematics and Computation}, 178(2):527--533, 2006.

\bibitem{pandey2010monotone}
RK~Pandey and Amit~K Verma.
\newblock Monotone method for singular bvp in the presence of upper and lower
  solutions.
\newblock {\em Applied mathematics and computation}, 215(11):3860--3867, 2010.

\bibitem{pandey2010solvability}
RK~Pandey and Amit~K Verma.
\newblock On solvability of derivative dependent doubly singular boundary value
  problems.
\newblock {\em Journal of Applied Mathematics and Computing}, 33(1-2):489--511,
  2010.

\bibitem{podlubny1999introduction}
Igor Podlubny.
\newblock An introduction to fractional derivatives, fractional differential
  equations, to methods of their solution and some of their applications.
\newblock {\em Math. Sci. Eng}, 198:340, 1999.

\bibitem{raja2018new}
Muhammad Asif~Zahoor Raja, Muhammad Umar, Zulqurnain Sabir, Junaid~Ali Khan,
  and Dumitru Baleanu.
\newblock A new stochastic computing paradigm for the dynamics of nonlinear
  singular heat conduction model of the human head.
\newblock {\em The European Physical Journal Plus}, 133:1--21, 2018.

\bibitem{saeed2017haar}
Umer Saeed.
\newblock Haar adomian method for the solution of fractional nonlinear
  lane-emden type equations arising in astrophysics.
\newblock {\em Taiwanese Journal of Mathematics}, 21(5):1175--1192, 2017.

\bibitem{scherer2011grunwald}
Rudolf Scherer, Shyam~L Kalla, Yifa Tang, and Jianfei Huang.
\newblock The {G}r{\"u}nwald--{L}etnikov method for fractional differential
  equations.
\newblock {\em Computers \& Mathematics with Applications}, 62(3):902--917,
  2011.

\bibitem{verma2011monotone}
Amit~K Verma.
\newblock The monotone iterative method and zeros of bessel functions for
  nonlinear singular derivative dependent bvp in the presence of upper and
  lower solutions.
\newblock {\em Nonlinear Analysis: Theory, Methods \& Applications},
  74(14):4709--4717, 2011.

\bibitem{math8071045}
Amit~K. Verma, Biswajit Pandit, Lajja Verma, and Ravi~P. Agarwal.
\newblock A review on a class of second order nonlinear singular bvps.
\newblock {\em Mathematics}, 8(7), 2020.

\bibitem{wang2020new}
Kang-Jia Wang.
\newblock A new fractional nonlinear singular heat conduction model for the
  human head considering the effect of febrifuge.
\newblock {\em The European Physical Journal Plus}, 135(11):871, 2020.

\end{thebibliography}
\bibliographystyle{plain}
\end{document}